\newcommand{\p}{\partial}
\newcommand{\dd}{{\rm d}}
\newcommand{\bd}{\begin{definition}}                
\newcommand{\ed}{\end{definition}}                  
\newcommand{\bc}{\begin{corollary}}                 
\newcommand{\ec}{\end{corollary}}                   
\newcommand{\bl}{\begin{lemma}}                     
\newcommand{\el}{\end{lemma}}                       
\newcommand{\bp}{\begin{proposition}}            
\newcommand{\ep}{\end{proposition}}                
\newcommand{\bere}{\begin{remark}}                  
\newcommand{\ere}{\end{remark}}                     
\newcommand{\bt}{\begin{theorem}}
\newcommand{\et}{\end{theorem}}
\newcommand{\be}{\begin{equation}}
\newcommand{\ee}{\end{equation}}
\newcommand{\bit}{\begin{itemize}}
\newcommand{\eit}{\end{itemize}}
\newtheorem{theorem}{Theorem}[section]
\newtheorem{corollary}[theorem]{Corollary}
\newtheorem{lemma}[theorem]{Lemma}
\newtheorem{proposition}[theorem]{Proposition}
\theoremstyle{definition}
\newtheorem{definition}[theorem]{Definition}
\theoremstyle{remark}
\newtheorem{remark}[theorem]{Remark}
\begin{document}
%

\title{A metrical approach to Finsler geometry}

\author{E. Minguzzi \footnote{Dipartimento di Matematica e Informatica ``U. Dini'', Universit\`a degli Studi di Firenze,  Via
S. Marta 3,  I-50139 Firenze, Italy. E-mail:
ettore.minguzzi@unifi.it}}

\date{}
\maketitle

\begin{abstract}
\noindent In the standard approach to Finsler geometry the metric is defined as a vertical Hessian and the Chern or Cartan connections appear as just two among many possible natural linear connections on the pullback tangent bundle. Here it is shown that the Hessian nature of the metric, the non-linear connection and the  Chern or Cartan connections can be derived from a few  compatibility axioms between metric and Finsler connection.
This result provides a metrical formulation of Finsler geometry which is well adapted to field theory, and which has proved useful in Einstein-Cartan-like approaches to Finsler gravity.
\end{abstract}

\section{Introduction}

The standard way of introducing Finsler geometry goes along the following lines. Let $M$ be a connected $n$-dimensional manifold,  $E=TM\backslash 0$  the slit tangent bundle,  $\pi\colon E\to M$ the projection,   $\mathcal{T}(M)$  the tensor bundle over $M$, and let $\pi^*(\mathcal{T}(M))$ be the pullback tensor bundle over $E$. We  denote with $VE\subset TE$ the vertical bundle over $E$, namely the set of all vectors $Z\in TE$ such that $\pi_*(Z)=0$.  Since $E_x:=\pi^{-1}(x)=T_xM\backslash 0$, is a vector space minus its origin, we have that for every $X\in E_x$, $T_X E_x=V_X E$ is naturally isomorphic to $T_xM$. Thusm for each $X\in E$, there is an isomorphism $v: T_xM\to V_XE$, $u \to u^v$ called the {\em vertical lift}.

(Pseudo-)Finsler geometry is concerned with a special type of metric $g\colon E\to \pi^*(T^*M\otimes_M T^*M)$, namely one for which there is  a continuous function $\mathscr{L}\colon E\to \mathbb{R}$, called {\em Finsler Lagrangian}, such that
\begin{itemize}
\item[(a)] $\mathscr{L}$ is $C^\infty$ on $E$,
\item[(b)] $\mathscr{L}(sy)=s^2 \mathscr{L}(y)$, for every $s>0$ and $y\in E$,
\item[(c)] for each $y\in E$, $u,v\in T_{\pi(y)}M$,
\[
g_y(u,v):=\frac{\p^2}{\p r\p s} \mathscr{L}(y+ru+sv) \Big|_{r=s=0}.
\]
is non-degenerate.
\end{itemize}
One speaks of `Finsler geometry' if $g$ is positive definite or of `Lorentz-Finsler geometry' if $g$ has Lorentzian signature (by continuity, as $M$ is connected, the signature is independent of the point). In the former case one might equivalently work with  the {\em Finsler fundamental function}  $F: E\to [0,\infty)$ defined by $\mathscr{L}=\tfrac{1}{2} F^2$. It can be observed that the condition ``$g$ is positive definite'' is equivalent to the condition ``$\mathscr{L}>0$ on $E$'' \cite{warner65,lovas10}. In order to simplify the terminology, in what follows  we shall generically speak of `Finsler geometry'  though the signature of $g$ will really play no role in our arguments.

We shall be particularly interested on $\pi^*(\mathcal{T}(M))$-valued forms on $E$.
There are some natural objects of this type. A natural vector-valued 0-form is the (radial)  Liouville vector field $y$ defined by $y:=X$ at $X\in E$, where we used $\pi^{-1}(x)=T_xM\backslash 0$, $x=\pi(X)$.
An example of vector-valued 1-form is provided by $\omega(Y):=\pi_*(Y)$, where $Y\in T_XE$ and $\pi_{*}\colon TE\to TM$.
As a third example, a metric $g$ on $E$ is a (0-form) section of $\pi^*(T^*M\otimes_M T^*M)$.

Let us introduce the Cartan torsion
\begin{equation} \label{cqz}
C(u,v,w)=\frac{1}{2} \frac{\p^3}{\p r\p s\p t} \mathscr{L}(y+ru+sv+tw) \Big|_{r=s=t=0}
\end{equation}
which is clearly totally symmetric, and let us consider a {\em Finsler connection}, namely a Koszul connection $\nabla$ on the pullback vector bundle $\pi^*(TM)\to E$.
 Due to the existence of the vertical lift, an equivalent approach using the vertical  bundle is possible.

Given a Finsler connection, let us consider the map $TE\to \pi^*(TM)$, $Z\to \nabla_Z y$.
The Finsler connection is said to be  {\em regular} (or {\em good})  \cite{szilasi89,anastasiei96,abate96}, if for every $X\in E$,  $x=\pi(X)$,  the  restricted map $V_XE \to T_xM$, $Z\to \nabla_Z y$, is injective (so if we lift to $V_XE$, the map  $Z\to (\nabla_Z y)^v$ is an injective endomorphism); equivalently,  defined $HE:=\textrm{ker} \nabla y$ we have $TE=HE\oplus VE$ (hence the dimension of $HE$ is $n$). It is {\em strongly regular} if the map $V_XE\to V_XE$,  $Z\to (\nabla_Z y)^v$, is the identity for every $X\in E$.

Strong regularity implies regularity. Every $n$-dimensional distribution $HE$ that splits $TE$ as above is an {\em Ehresmann (non-linear) connection}, thus each  regular Finsler connection induces a non-linear connection  $HE:=\textrm{ker} \nabla y$.

The Finsler connection can be used to extend the exterior differential $\dd$ acting on forms on $E$ into the covariant exterior differential $D$ acting on $\mathcal{T}(M)$-valued forms on $E$ by imposing the identity $D (T\otimes \beta)=\nabla T\wedge \beta+T\otimes \dd \beta$, where $T$ is a section of $\pi^*(\mathcal{T}(M))$ and $\beta$ is a form on $E$.

The curvature is a $TM\otimes_M T^*M$-valued 2-form on $E$  defined by the identity
\[
D^2 X=R(X) ,
\]
where $X$ is any section of $\pi^*(TM)\to E$. The contraction
\[
R(y)=D^2y ,
\]
is of special interest in the context of the non-linear connection $HE$ induced by $\nabla$  as one of its contributions is the {\em non-linear curvature} (see below). Notice that $\omega$ is not the only natural $TM$-valued 1-form on $E$, there is also $\bar \omega:=D y=\nabla y$. Now, since $VE=\textrm{ker}  \omega$ the regularity condition reads $ \textrm{ker} \bar \omega\cap  \textrm{ker}  \omega =0$, the zero section, or equivalently: for every $X\in TE\backslash 0$, $\bar \omega(X)\ne 0$ or $ \omega(X)\ne 0$. The latter formulation means that the components of $\omega$ and $\bar \omega$ provide a $2n$-dimensional basis for $T^*E$.

The $TM$-valued 2-forms on $E$,  $T=D\omega$ and $\bar T=D\bar\omega=R(y)$ are called {\em horizontal and vertical torsions}, respectively.
We have the first Bianchi identities
\[
D T=R\underset{\boldsymbol{\cdot}{}}{\wedge} \omega, \qquad D \bar T=R\underset{\boldsymbol{\cdot}{}}{\wedge} \bar \omega ,
\]
and second Bianchi identity
\[
D R=0.
\]

\begin{remark}
When tensor indices are omitted, contractions are emphasized using a dot, see e.g.\ the above first Bianchi identities. We stress that the dot does not represent a metric, that instead is always displayed if present (this is convenient when the connection is not metric compatible). So, for instance, $g(u,v)$ can also be written $u\cdot g \cdot v$.

When it comes to use formulas in components we might use some common abuse of notation, for instance, given an endomorphism-valued $k$-form $T=T^a{}_b e_a\otimes e^b$ its exterior derivative in components will be denoted $D T^a{}_b$ instead of $(D T)^a{}_b$.
\end{remark}

In his study of the equivalence problem in Finsler geometry Chern proved a result \cite{chern48,chern92b,bao93,spiro99} which can be phrased as follows
\begin{theorem} Suppose that $g$ is the vertical Hessian of a Finsler Lagrangian, and let $C$ be defined as in Eq.\ (\ref{cqz}).
There is a unique Finsler connection $\nabla$ with the following properties
\begin{itemize}
\item[(i)]  Regularity,
\item[(ii)] $T=0$, \qquad  (no horizontal torsion)
\item[(iii)]  $ D g(X)(u,v)=2 C(u,v,D y(X))$. \qquad  (almost metric compatibility)
\end{itemize}
\end{theorem}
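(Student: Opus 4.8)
The plan is to reduce everything to connection $1$-forms and then run a Koszul-type argument adapted to the Liouville field. First I would fix a chart $(x^i)$ on $M$, inducing fibre coordinates $(x^i,y^i)$ on $E$ and a local frame $\{e_i\}$ of $\pi^*(TM)$ (the pullback of $\p/\p x^i$). Writing $\nabla e_i=\omega^j{}_i\otimes e_j$ for connection $1$-forms $\omega^j{}_i$ and decomposing each into horizontal and vertical parts, $\omega^j{}_i=\Gamma^j{}_{ik}\,dx^k+\dot\Gamma^j{}_{ik}\,dy^k$, the objects in the statement become $\omega=e_i\otimes dx^i$, $\bar\omega=Dy=e_j\otimes(dy^j+y^i\omega^j{}_i)$, and $T=D\omega=e_j\otimes(\omega^j{}_i\wedge dx^i)$, while $Dg=\nabla g$ has components $\nabla g_{ij}=dg_{ij}-g_{kj}\omega^k{}_i-g_{ik}\omega^k{}_j$.

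The first real step is to exploit (ii). Since $T^j=\Gamma^j{}_{ik}\,dx^k\wedge dx^i+\dot\Gamma^j{}_{ik}\,dy^k\wedge dx^i$ and the $2$-forms $dy^k\wedge dx^i$ are linearly independent from the $dx^k\wedge dx^i$, the vanishing of the horizontal torsion splits into two independent requirements: $\dot\Gamma^j{}_{ik}=0$ and $\Gamma^j{}_{ik}=\Gamma^j{}_{ki}$. Thus (ii) alone forces the connection to be \emph{horizontal} (no $dy$ part) and symmetric in its lower indices; in particular $\nabla_Z y=Z$ for $Z\in VE$, so the connection is automatically strongly regular, and $HE=\ker\bar\omega$ is spanned by the horizontal fields $\delta_m=\p_{x^m}-N^l{}_m\p_{y^l}$ with $N^l{}_m:=y^k\Gamma^l{}_{km}$, regularity (i) guaranteeing that this is a genuine complement of $VE$.

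Next I would feed the Hessian nature of $g$ into (iii). Because $g_{ij}=\p^2\mathscr{L}/\p y^i\p y^j$ we have $\p g_{ij}/\p y^k=2C_{ijk}$, so $dg_{ij}=\p_{x^k}g_{ij}\,dx^k+2C_{ijk}\,dy^k$; the purely vertical $dy^k$ terms on the two sides of (iii) then cancel \emph{identically}, which is exactly the point at which the Hessian assumption enters. What survives is the horizontal part, and recognising $\p_{x^m}g_{ij}-2C_{ijl}N^l{}_m=\delta_m g_{ij}$ collapses (iii) to the \emph{horizontal metric compatibility}
\[
\delta_m g_{ij}=g_{kj}\Gamma^k{}_{im}+g_{ik}\Gamma^k{}_{jm}.
\]
Together with $\Gamma^k{}_{im}=\Gamma^k{}_{mi}$ this is formally the Levi-Civita system, with the horizontal derivative $\delta_m$ in place of $\p_{x^m}$, solved by a Christoffel formula $\Gamma_{jim}=\tfrac12(\delta_m g_{ij}+\delta_i g_{mj}-\delta_j g_{im})$ with $\Gamma_{jim}:=g_{kj}\Gamma^k{}_{im}$.

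The main obstacle is that this Koszul formula is only \emph{apparently} explicit: the derivative $\delta_m$ contains $N^l{}_m=y^k\Gamma^l{}_{km}$, built from the coefficients being solved for, so the system is self-referential. The resolution rests on the Euler relation $C_{ijk}y^k=0$, which expresses the degree-$0$ homogeneity of $g_{ij}$. Contracting the compatibility relation twice with $y$ makes every Cartan-tensor term drop out and expresses the spray coefficients $2G^k:=y^iy^m\Gamma^k{}_{im}$ through $g$ and its $x$-derivatives alone; contracting once with $y$ then yields $N$ in terms of the now-known spray; and the full Koszul formula yields $\Gamma$. Since these formulas are \emph{forced} on any connection satisfying (i)--(iii), there is at most one such connection; conversely, taking them as definitions and checking that the resulting $\Gamma^i{}_{jk}$ obey the correct transformation law and verify (i)--(iii) gives existence. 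I would therefore spend most of the effort on these $y$-contractions, the attendant cancellations, and the tensoriality check that upgrades the local formulas to a global Finsler connection.
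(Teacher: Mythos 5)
Your proposal is correct, and it is essentially the classical coordinate proof of Chern's theorem---the route the paper explicitly describes as the ``traditional formulation''---rather than the paper's own derivation. You assume the Hessian property from the start and use it to cancel the vertical parts of (iii), reduce (ii)+(iii) to a Levi-Civita-type system in the horizontal derivatives $\delta_m$, and break the self-referentiality ($N$ hidden inside $\delta_m$) by $y$-contractions using $C_{ijk}y^k=0$. The paper never argues this way: it deduces the statement from its Theorem \ref{kkj} (which rests on Theorem \ref{xkj}), where the Hessian property is a \emph{conclusion} rather than a hypothesis---conditions ($\alpha$)--($\gamma$) of Theorem \ref{xkj} force $g$ to be a vertical Hessian---and where the hypotheses are organized so that most of the argument is invariant under the symmetry (\ref{syn}), identifying first the whole class $[\nabla]_g$ containing all notable connections and only afterwards pinning down Chern's by $\Lambda_{abc}=0$ and $V^a{}_{bc}=0$. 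The computational cores nevertheless overlap substantially: your Christoffel formula and the two $y$-contractions are precisely the paper's Eqs.\ (\ref{tva}), (\ref{hfx}), (\ref{raz}) and (\ref{coa}). What your route buys is self-containedness and transparency about where each hypothesis enters (T=0 in the coordinate cobasis $\{dx^k,dy^k\}$ forces $\dot\Gamma^j{}_{ik}=0$ and $\Gamma^j{}_{ik}=\Gamma^j{}_{ki}$, so regularity is even automatic; the Hessian assumption kills the vertical part of (iii)); what the paper's route buys is a strictly stronger theorem with no Hessian hypothesis, plus the invariance theory and the identification of the Cartan connection as the canonical metric connection of the class. One item you should make explicit in the existence half: after defining $G$, $N$ and $\Gamma$ by the contracted formulas, you must verify that $y^k\Gamma^l{}_{km}$, recomputed from your Christoffel formula whose $\delta_m$ already contains the defined $N$, returns that same $N^l{}_m$; this consistency does hold, again because $C_{ijk}y^k=0$, and it is the analogue of the paper's Eq.\ (\ref{coa}) identifying the induced non-linear connection as Barthel's.
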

Here $y$ is the Liouville vector field, $X\in TE$ and $u,v\in TM$.

\begin{remark}
Traditional formulations of this result \cite[Thm.\ 3.2]{chern00} \cite{chern92b,bao00,shen01} pass through the introduction of local coordinates and through the definition  of the connection forms associated with $\nabla$. The above is really the same result presented in a more intrinsic language.
\end{remark}

Although we made an effort to express the foundations of Finsler geometry in rather short terms, the reader will recognize that the above presentation is not entirely elegant.
In fact, the above derivation relies on a number of assumptions that do not seem to be well motivated. The metric had to be assumed of Hessian type and the Lagrangian was assumed to be positive homogeneous. The Chern connection required the preliminary definition of the Cartan torsion. Moreover, although the defining equations are few in number, they are certainly not the only equations that would lead to a meaningful connection.
Indeed, once the Cartan torsion is defined, one can define several other Finsler connections in rather natural ways. We mention the Berwald connection, the Cartan connection and the Hashiguchi connection \cite{matsumoto86,bejancu90,anastasiei96b,szilasi14,minguzzi14c}, to name a few \cite{vincze00}.

An improvement of Chern's result was obtained by Abate in \cite[Thm.\ 2.2]{abate96} where it is observed that $(iii)$ can be replaced by\footnote{His way of obtaining the torsion is less direct. He first extends $\nabla$ to a connection for $TE\to E$ and then considers its torsion, which splits into two components corresponding to our horizontal and vertical torsions. The condition he places on the torsion is equivalent to (ii) in Chern's theorem.}
\begin{itemize}
\item[$(iii')$] $ D g(X)=0$ whenever $D_X y=0$ .
\end{itemize}
The reader can find a proof of this improved version after the proof of our Theorem \ref{kkj} below.

Although this version removes the need for introducing the Cartan torsion, it still shares many of the drawbacks of the previous version, including the need for a metric of Hessian type. It is perfectly legitimate to develop Finsler geometry from a positive homogeneous Lagrangian function $\mathscr{L}$. This approach is fundamentally equivalent to that which considers the indicatrix hypersurface, i.e.\ the subset of $TM$  determined by the equation $\vert \mathscr{L}\vert =1/2$, as the main fundamental object of Finsler geometry. It is in fact an attractive point of view, the Finsler metric $g$ being related to the affine metric of the indicatrix \cite{laugwitz11} \cite[Prop.\ 4.1]{mo10} \cite[Thm.\ 6]{minguzzi15e}. It is also a quite natural point of departure for further generalizations of Finsler geometry \cite{bryant02,sabau10}. Still this approach is more variational in nature than metrical, the function $\mathscr{L}$ serving to define the length of curves and the geodesics via a stationary action.

Another elegant approach which is worth mentioning is that initiated by Klein, Voutier and Grifone \cite{klein68,grifone72,godbillon69} and pursued by Szilasi, Vincze and others \cite{szilasi98,szilasi00,szilasi14,youssef14}. Here Finsler geometry is built from a symplectic form $\Omega$ over $E$. This approach is  tailored to the geometry of the tangent bundle, so its methods depart from the typical pullback tangent bundle approach of traditional Finsler geometry.

Chern once summarized the idea behind Finsler geometry as follows  \cite{chern96}: {\em  Finsler geometry is just  Riemannian geometry without the quadratic restriction}. This claim could be really justified if we could remove the Hessian assumption on the metric. Ideally, in a metrical approach Finsler geometry should be determined from compatibility conditions between just two objects, the metric $g$ and the connection $\nabla$. No other forms of derivative, however natural, should be introduced (so the vertical derivative used above to define the Cartan torsion should be avoided).

An argument that motivates the search for a metrical approach is the following. In the usual non-metrical approach one could also have the justified concern  that manipulations of the structure equations might always leave aside further conditions implicit in the functional dependence of $g$ on $\mathscr{L}$.  On the contrary, in the sought metrical approach one could not lose track
of the Hessian nature of $g$, for that would be a consequence of the structure/compatibility equations themselves.

This type of concern is not merely academic, rather it is of physical interest. In Finslerian field theories working with the right geometry is not sufficient. One needs to know what are the independent field variables. The possibility of founding Finsler geometry on the metric allows one to considering it, and possibly  the connection, as the independent field variable, much in the same way as it is done in Einstein's gravity or in its  Palatini version. Indeed, this paper has provided the foundation necessary for this type of development, an Einstein-Cartan-like anisotropic theory resting on the theory developed in this work having been developed in the recent work \cite{minguzzi20c}.

In general, one does not want to work under (however well posed) geometrical assumptions that are spoiled under a variation of  the selected field variables. So if the selected field variable is $g$ (rather than $\mathscr{L}$), and hence if we look for a metrical field theoretical description, the Hessian nature of the metric cannot be assumed a priori as it would be spoiled under a small perturbation (small variations are considered in the Lagrangian derivation of the field equations).

With the present results we prove  that this metric program can be pursued successfully, as both the Hessian nature of the metric and the Chern and Cartan connections can be recovered from a few compatibility axioms (Thms.\ \ref{kkj} and \ref{kwj}). It must be stressed that several references privilege the Chern connection on the grounds that it can be characterized in terms of the connection forms. We shall show that, as a matter of fact,  Cartan's connection admits a similar characterization valid also when  the Hessian condition is dropped.

The reader will also recognize that our characterizations are by no means differential form reformulations of known results. For instance, the Cartan connection is usually characterized by imposing  metricity, the vanishing of the hh-component of the horizontal torsion, and  the vanishing of the  vv-component of the vertical torsion \cite{ingarden93,abate96,minguzzi14c}. In fact, the vertical torsion will not enter  our characterization at all. In general, we shall not need to split the form part of the various objects in horizontal or vertical parts, so that no expressions of the form `` the vv-component of...'' or ``the vh-component of...'' will ever be used in this work. We shall favor as much as possible algebraic characterizations, for they are the most convenient in computations.

\section{Coordinate expressions and regularity}

Let us introduce some local coordinate expressions that will be useful in proofs. Let $\{x^a \}$ be local coordinates on $M$.
We have an induced basis field $x \mapsto e_a(x)$, $e_a\colon M \to TM$, namely $e_a=\p/\p x^a$ and hence a field of dual cobasis $x \mapsto e^a(x)$, $e^a\colon M \to T^*M$, $e^a=\dd x^a$,  $e^a (e_b)=\delta^a_b$.
 Then $\{x^a, y^b\}$ are induced coordinates on $E$, $y^a\colon TM\to \mathbb{R}$, $y^a(X):=e^a(X)$.
The connection 1-forms on $E$, $\omega^a{}_{b}$, determining the connection  $\nabla$ are defined by $\nabla e_b=\omega^a{}_{b} e_a$. The connection $\nabla$ is said to be positive homogeneous of degree zero if the connection forms have this property on $E$. The cocycle of the connection 1-forms under coordinate changes shows that this definition is well posed. In this case $\bar \omega=D y$ is positive homogeneous of degree one.

Given a metric we write $g=g_{ab} e^a\otimes e^b$, where $g_{ab}$ are the metric components. We shall also write $y=y^a e_a$, $\omega=\omega^a e_a$, $\bar \omega=\bar \omega^a e_a$. Here $\omega^a=\pi^*(e^a)$, in fact for $Z\in T_XE$ we get as desired $\omega(Z)=\omega^a(Z) e_a=e^a(\pi_*(Z)) e_a=\pi_*(Z)$.

For a regular connection, since $\{\omega^a,\bar \omega^a\}$ is a basis for $T^*E$ (cf.\ the discussion in the introduction), $\omega^a{}_{b}$ is expressible as a linear combination of the form
\[
\omega^a{}_{ b}=H^a{}_{bc} \omega^c+V^a{}_{bc} \bar \omega^c,
\]
where $H^a{}_{bc}$ and $V^a{}_{bc}$ are suitable coefficients. Notice that if $\nabla$ is positive homogeneous of degree zero then $H^a{}_{bc}$ is positive homogeneous of degree zero and $V^a{}_{bc}$ is positive homogeneous of degree minus one. By definition
\begin{equation} \label{vid}
\bar \omega^a=D y^a= \dd y^a+ \omega^a{}_{b} y^b=\dd y^a+ H^a{}_{bc} y^b \omega^c+V^a{}_{bc} y^b \bar \omega^c.
\end{equation}
For a regular connection we denote with $\omega_a$ the vector on $HE:=\textrm{ker} (X\mapsto \nabla_X y)$ that projects to $e_a$, namely its horizontal lift (it is uniquely determined by regularity), and set $\bar \omega_a:=\frac{\p}{\p y^a}$,  then $\{\omega_a, \bar \omega_a\}$ is a basis of $TE$. Equation (\ref{vid}) evaluated on $\omega_a$ gives, using the horizontality of these vectors and hence $\bar \omega^a(\omega_b)=0$: $(\dd y^a+ H^a{}_{bc} y^b \omega^c)(\omega_d)=0$. That is, we obtain the inclusion $\subset$ of the following formula
\[
HE=\textrm{ker} \, e_a \left(\dd y^a+ H^a{}_{bc} y^b \omega^c\right).
\]
The other inclusion follows from the fact that $T_XE\to T_xM$, 
\[
Z\mapsto  e_a \left(\dd y^a+ H^a{}_{bc} y^b \omega^c\right)(Z)
\]
is injective as it reduces to the identity in $V_XE$ (with the usual identification between $T_xM$ and $V_XE$ provided by the vertical lift), thus the kernel in the right-hand side is at most $n$-dimensional.

We conclude that  $N^a_c:=H^a{}_{bc} y^b$ are the so called non-linear connection coefficients. The horizontal lift of $e_a$ reads $\omega_a=\frac{\p}{\p x^a}-N^b_a \frac{\p}{\p y^b}$ and is also denoted $\frac{\delta }{\delta x^a}$.

The dual to the basis $\{\omega_a, \bar \omega_a\}$ is the basis $\{\omega^a,\tilde \omega^a\}$ where
\[
\tilde \omega^a= \dd y^a+N^a_b \omega^c.
\]
 Equation (\ref{vid}) states that
\begin{equation} \label{cxd}
\tilde \omega^a=Q^a_b \bar \omega^b,
\end{equation}
 where
 \[
 Q^a_b=\delta^a_b- V^a{}_{cb} y^c.
 \]
However, we cannot conclude that $\bar \omega^a=\tilde \omega^a$, or equivalently $D y^a= \dd y^a+ N^a_c \omega^c$, for this is true iff $V^a{}_{bc} y^b=0$ namely iff the map $Z\mapsto \nabla_Z y$ is the identity once restricted to $V_XE$ (with the usual identification between $T_xM$ and $V_XE$). Since $\{\omega^a,\bar \omega^a\}$ and $\{\omega^a,\tilde \omega^a\}$ are both bases for $T^*E$, the matrix $Q^a_b$ that mediates the change of basis between $\{\bar \omega^a\}$ and $\{\tilde \omega^a\}$ must be invertible.
The connection forms can be expanded in terms of $\{\omega^a,\tilde \omega^a\}$ as follows
\[
\omega^a{}_{b}=H^a{}_{bc} \omega^c+\tilde V^a{}_{bc} \tilde \omega^c,
\]
where $\tilde V^a{}_{bc}=V^a{}_{bd} (Q^{-1})^d_c$. Moreover,
\begin{equation} \label{vif}
\bar \omega^a=D y^a= \dd y^a+ \omega^a{}_{b} y^b=\dd y^a+ N^a_{c} \omega^c+\tilde V^a{}_{bc} y^b \tilde \omega^c
\end{equation}
thus the regularity condition implies that the matrix  $\delta^a_c+\tilde V^a{}_{bc} y^b$ is invertible. Its inverse is $Q$. The previous calculations show that

\begin{proposition} \label{zzq}
Under regularity the next properties are equivalent (a) strong regularity, (b) $V^a{}_{bc} y^b=0$, (c) $\tilde V^a{}_{bc} y^b=0$, (d) $\tilde \omega^a=\bar \omega^a$.
\end{proposition}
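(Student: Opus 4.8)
The plan is to prove that property (b) is equivalent to each of (a), (c) and (d); the three resulting equivalences then chain together to give the full statement. The whole argument is linear algebra carried out in the two bases $\{\omega^a,\bar\omega^a\}$ and $\{\omega^a,\tilde\omega^a\}$ of $T^*E$, and it essentially assembles the computations already displayed before the statement. The one piece of input I would make explicit at the outset is the dual-basis relation: since $\{\omega^a,\tilde\omega^a\}$ is dual to $\{\omega_a,\bar\omega_a\}$, one has $\tilde\omega^a(\bar\omega_b)=\delta^a_b$ and $\omega^a(\bar\omega_b)=0$.

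First I would establish (a) $\Leftrightarrow$ (b). From $y=y^a e_a$ one computes $\nabla y=\bar\omega^a\otimes e_a$, so for a vertical vector $Z\in V_XE$ one has $\nabla_Z y=\bar\omega^a(Z)\,e_a$. Writing $Z=Z^b\bar\omega_b$ and recalling that the identification $VE\simeq TM$ sends $\bar\omega_b=\p/\p y^b$ to $e_b$, the map $Z\mapsto\nabla_Z y$ restricted to $VE$ is the identity precisely when $\bar\omega^a(\bar\omega_b)=\delta^a_b$. To evaluate $\bar\omega^a(\bar\omega_b)$ I would invert (\ref{cxd}) into $\bar\omega^a=(Q^{-1})^a_c\,\tilde\omega^c$ and apply it to $\bar\omega_b$, using the dual-basis relation, obtaining $\bar\omega^a(\bar\omega_b)=(Q^{-1})^a_b$. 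Hence strong regularity holds iff $Q^{-1}=I$, i.e.\ iff $Q^a_b=\delta^a_b$, which by $Q^a_b=\delta^a_b-V^a{}_{cb}y^c$ is exactly the vanishing of the contraction of $y$ into the first lower index of $V$, namely (b).

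The equivalence (b) $\Leftrightarrow$ (d) is then read off directly from (\ref{cxd}): since $\{\omega^a,\bar\omega^a\}$ is a basis, $\tilde\omega^a=Q^a_b\bar\omega^b$ forces $\tilde\omega^a=\bar\omega^a$ for all $a$ iff $Q^a_b=\delta^a_b$, i.e.\ iff (b). For (b) $\Leftrightarrow$ (c) I would contract the definition $\tilde V^a{}_{bc}=V^a{}_{bd}(Q^{-1})^d_c$ with $y^b$, giving $\tilde V^a{}_{bc}y^b=(V^a{}_{bd}y^b)(Q^{-1})^d_c$; because $Q$, and hence $Q^{-1}$, is invertible, the left-hand side vanishes for all $c$ iff $V^a{}_{bd}y^b=0$ for all $d$, which is (b) up to relabeling. (As an alternative route for (c) $\Leftrightarrow$ (d) one may instead use (\ref{vif}) together with $\tilde\omega^a=\dd y^a+N^a_c\omega^c$, which yields $\bar\omega^a-\tilde\omega^a=\tilde V^a{}_{bc}y^b\,\tilde\omega^c$, so that $\bar\omega^a=\tilde\omega^a$ iff $\tilde V^a{}_{bc}y^b=0$ by linear independence of the $\tilde\omega^c$.)

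The only genuinely delicate step is the first one: correctly translating the coordinate-free definition of strong regularity into the component condition $\bar\omega^a(\bar\omega_b)=\delta^a_b$, and identifying the matrix representing $Z\mapsto\nabla_Z y$ on $VE$ with $Q^{-1}$. This requires keeping straight the identification $VE\simeq TM$ and the duality between the two bases. Once that dictionary is fixed, the invertibility of $Q$ (guaranteed by regularity, as noted before the statement) makes all three equivalences elementary, and the proof reduces to collecting the calculations preceding Proposition \ref{zzq}.
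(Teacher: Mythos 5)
Your proof is correct and follows essentially the same route as the paper, whose own ``proof'' is precisely the chain of calculations preceding the proposition (Eqs.\ (\ref{vid})--(\ref{vif}), the matrix $Q^a_b=\delta^a_b-V^a{}_{cb}y^c$ and the relation $\tilde V^a{}_{bc}=V^a{}_{bd}(Q^{-1})^d_c$), which you correctly assemble, including the needed invertibility of $Q$ from regularity and the dual-basis dictionary $\tilde\omega^a(\bar\omega_b)=\delta^a_b$.
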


\begin{remark}
Notice that it would be incorrect to state that the regularity condition is equivalent to the invertibility of the matrix $\delta^a_c+\tilde V^a{}_{bc} y^b$ (or of the matrix $Q^a_b$) for this matrix requires  in its very definition the assumption of regularity, as one needs to know that the 1-form connection can be expanded in the basis $\{\omega^a,\tilde \omega^a\}$ (or $\{\omega^a,\bar\omega^a\}$). However, if a non-linear connection is given in advance then we know from the outset that the covectors $\{\omega^a,\tilde \omega^a\}$ are well defined and form a basis. Then the request of regularity is understood as the compatibility condition between the given non-linear connection and the connection $\nabla$, that is the condition that the former coincides with that induced by $\nabla$ by $HE:=\textrm{ker} (X\mapsto \nabla_X y)$, cf.\ \cite{minguzzi14c}.
\end{remark}

Given the importance of the notion of regularity, the next result is of particular importance.
\begin{theorem} \label{chp}
Let $g\colon E\to \pi^*(T^*M\otimes_M T^*M)$ be a symmetric bilinear form and let $\nabla$ be a connection for the bundle $\pi^*(TM)\to E$. The 2-form on $E$ given by
\[
\Omega= g_{ab} \bar \omega^a\wedge \omega^b=
D y \underset{\boldsymbol{\cdot}}{\wedge} (g \cdot \omega)
\]
is non-degenerate iff $\nabla$ is regular and $g$ is non-degenerate.
\end{theorem}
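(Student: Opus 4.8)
The plan is to test non-degeneracy of the scalar $2$-form $\Omega$ through the musical map $Z\mapsto \iota_Z\Omega$ from $TE$ to $T^*E$; since $E$ is $2n$-dimensional this map is injective iff it is bijective, so $\Omega$ is non-degenerate iff the only $Z\in TE$ with $\iota_Z\Omega=0$ is $Z=0$. Contracting the intrinsic expression $\Omega=g_{ab}\,\bar\omega^a\wedge\omega^b$ with $Z$ gives
\[
\iota_Z\Omega = g_{ab}\,\bar\omega^a(Z)\,\omega^b - g_{ab}\,\omega^b(Z)\,\bar\omega^a ,
\]
so the whole argument reduces to reading off when this $1$-form can vanish.

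For the implication ``regular and $g$ non-degenerate $\Rightarrow$ $\Omega$ non-degenerate'' I would use that under regularity $\{\omega^a,\bar\omega^a\}$ is a basis of $T^*E$, as recalled in the introduction. Then $\iota_Z\Omega=0$ forces each coefficient to vanish separately: $g_{ab}\,\omega^b(Z)=0$ for all $a$ and $g_{ab}\,\bar\omega^a(Z)=0$ for all $b$. Non-degeneracy of $g$ yields $\omega(Z)=0$ and $\bar\omega(Z)=0$, i.e.\ $Z\in\textrm{ker}\,\omega\cap\textrm{ker}\,\bar\omega$, which is the zero section by regularity; hence $Z=0$.

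For the converse I would argue by contraposition, in two cases covering the negation of the hypothesis. If $\nabla$ is not regular there is $Z\neq 0$ with $\omega(Z)=\bar\omega(Z)=0$, so both terms of $\iota_Z\Omega$ vanish and $Z$ lies in the kernel of $\Omega$. If instead $\nabla$ is regular but $g$ is degenerate, choose $v\neq 0$ with $g_{ab}v^b=0$ and use the basis property to pick $Z$ with $\omega(Z)=v$ and $\bar\omega(Z)=0$; then $\iota_Z\Omega=-g_{ab}v^b\bar\omega^a=0$ while $Z\neq 0$, again exhibiting degeneracy.

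The only real subtlety — the main obstacle — is that the horizontal--horizontal cross term $g_{ab}N^a_c\,\dd x^c\wedge \dd x^b$ present in $\Omega$ could a priori interfere; the intrinsic contraction above sidesteps it completely, but if one prefers a coordinate check one expands $\omega^a{}_{b}=\Gamma^a{}_{bc}\,\dd x^c+\Phi^a{}_{bc}\,\dd y^c$, so that $\bar\omega^a=A^a_c\,\dd y^c+N^a_c\,\dd x^c$ with $A^a_c=\delta^a_c+\Phi^a{}_{bc}y^b$, and checks that regularity is exactly invertibility of $A$ by restricting $\bar\omega$ to $VE$. The matrix of $\Omega$ in the basis $\{\dd x^a,\dd y^a\}$ is then block anti-triangular with vanishing $\dd y\wedge \dd y$ block, and a short computation gives $\det[\Omega]=(\det A\,\det g)^2$, the $\dd x^c\wedge \dd x^b$ block dropping out of the determinant entirely. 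Either way the conclusion is that non-degeneracy of $\Omega$ is equivalent to $\det A\neq 0$ together with $\det g\neq 0$, that is, to regularity of $\nabla$ and non-degeneracy of $g$.
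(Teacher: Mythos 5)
Your proposal is correct, and its overall strategy is the same as the paper's: test the kernel of $Z\mapsto i_Z\Omega$, using the equivalence between regularity and the basis property of $\{\omega^a,\bar\omega^a\}$, with the same three-case analysis (non-regular $\nabla$, regular $\nabla$ with degenerate $g$, and the good case). There are two genuine, if modest, differences worth recording. First, in the forward implication the paper expands the test vector $X$ in the dual frame $\{\omega_a,\bar\omega_a\}$ of horizontal lifts and vertical coordinate vectors, which forces the matrix $Q^{-1}$ of Eq.\ (\ref{cxd}) into the computation and requires its invertibility; you instead expand the covector $i_Z\Omega$ directly in the coframe $\{\omega^a,\bar\omega^a\}$, so the two coefficient equations $g_{ab}\,\bar\omega^a(Z)=0$ and $g_{ab}\,\omega^b(Z)=0$ appear symmetrically and $Q$ never enters --- a genuinely cleaner execution of the same idea. (Relatedly, in the degenerate-$g$ case the paper uses a vertical test vector with $\bar\omega(X)=u$, while you prescribe $\omega(Z)=v$, $\bar\omega(Z)=0$; both work under regularity.) Second, your optional coordinate computation, with $\bar\omega^a=A^a_c\,\dd y^c+N^a_c\,\dd x^c$ and the block matrix $\begin{pmatrix} S & -B^T\\ B & 0\end{pmatrix}$, $B=A^Tg$, giving $\det[\Omega]=(\det A\,\det g)^2$, has no counterpart in the paper: it buys an explicit determinant formula making visible that the $\dd x\wedge \dd x$ block is irrelevant to non-degeneracy, at the price of being coordinate-bound, whereas the intrinsic argument generalizes without change to any frame.
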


\begin{proof}
Suppose that $\nabla$ is not regular, then $\{\omega^a,\bar \omega^a\}$ do not form a basis, hence at some point $v\in E$ they span just a subspace of $T^*_vE$, so we can find a vector $X\in T_vE$ that annihilates all these 1-forms and hence $i_X\Omega=0$, which means that $\Omega$ is degenerate. Thus the non-degeneracy of $\Omega$ implies the regularity of $\nabla$, but it also implies the non-degeneracy of $g$. Indeed, suppose that there is some $v\in E$ and some vector $u=u^a e_a\in TM$ such that $g_v(u,\cdot)=0$. By regularity   we could find $X\in VE$ such that $D_X y^a=\bar \omega^a(X)= u^a$ thus $i_X \Omega=0$, a contradiction.

Conversely, if $g$ is non-degenerate and the connection is regular any vector $X\in T_vE$ can be expanded in the basis $\{\omega_a,\bar \omega_a\}$, $X=r^a\omega_a+ t^a\bar \omega_a$, and thus $i_X\Omega= -r^a g_{ab} \bar \omega^b+g_{ad}  (Q^{-1})^d_b t^b  \omega^a$. If $i_X\Omega=0$ then by regularity $r^a g_{ab}=0$ and $g_{ad}  (Q^{-1})^d_b t^b=0$, and by the non-degeneracy of $g$, $r^a=t^a=0$, namely $X=0$, that is $\Omega$ is non-degenerate.
\end{proof}

It can be observed that
\begin{align*}
\dd(y^a g_{ab} \omega^b)&=(D y^a) \wedge g_{ab} \omega^b+y^a (D g_{ab})\wedge \omega^b+ y^a g_{ab} D\omega^a
\end{align*}

For the  Chern connection the last two terms vanish,
$\dd (y^a g_{ab} \omega^b)=\Omega$, that is $\Omega$ is a symplectic form. The 1-form $y^a g_{ab} \omega^b$ is the Hilbert form.
We have seen that for a strongly regular  connection  as Chern's, $D y^a=\dd y^a+N^a_b \omega^b$, where $N^a_b=H^a_{cb} y^c$, thus

\begin{proposition}
Let $\nabla$ be a strongly regular Finsler connection that induces the same non-linear connection as the Chern connection, then $\Omega:=(D y^a) \wedge g_{ab} \omega^b$ is a symplectic form.
\end{proposition}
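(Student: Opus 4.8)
The plan is to check the two defining properties of a symplectic form---non-degeneracy and closedness---separately, and the organizing observation is that the dependence of $\Omega$ on the connection $\nabla$ enters only through the single $TM$-valued $1$-form $Dy=\bar\omega$.

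For non-degeneracy I would simply invoke Theorem \ref{chp}. The $2$-form in the statement is $\Omega=g_{ab}\bar\omega^a\wedge\omega^b$, which is precisely the object treated there; since strong regularity implies regularity and the Finsler metric $g$ is non-degenerate, the non-degeneracy of $\Omega$ is immediate.

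The content of the proposition therefore lies in closedness, and here the key step is a reduction to the Chern connection. For any strongly regular connection Proposition \ref{zzq}(d) gives $Dy^a=\tilde\omega^a=\dd y^a+N^a_b\,\omega^b$, where the coefficients $N^a_b$ are those of the non-linear connection induced by $\nabla$. Because the horizontal lift $\omega_a=\p/\p x^a-N^b_a\,\p/\p y^b$, and hence the $N^a_b$, is uniquely fixed by the distribution $HE$, two strongly regular connections that induce the \emph{same} non-linear connection necessarily share the same $1$-forms $Dy^a$. Applying this to our $\nabla$ and to the (strongly regular) Chern connection, the two produce identical $Dy^a$, so the $2$-form $\Omega=(Dy^a)\wedge g_{ab}\omega^b$ coincides with the $\Omega$ built from the Chern connection.

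It then remains to recall why the Chern $\Omega$ is closed, which is exactly the computation displayed just before the statement: $\dd(y^a g_{ab}\omega^b)=(Dy^a)\wedge g_{ab}\omega^b+y^a(Dg_{ab})\wedge\omega^b+y^a g_{ab}\,D\omega^b$. For the Chern connection the last two terms vanish---$D\omega=T=0$ by the absence of horizontal torsion, while $y^a Dg_{ab}=2\,y^a C_{abc}\,Dy^c=0$ since the components $C_{abc}$ of the Cartan torsion obey $y^a C_{abc}=0$, a consequence of the degree-zero homogeneity of $g_{ab}$ in $y$. Hence the Chern $\Omega$ equals $\dd$ of the Hilbert form $y^a g_{ab}\omega^b$, so it is exact and therefore closed; together with non-degeneracy this shows $\Omega$ is symplectic. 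The part to get right is conceptual rather than computational: one must justify that ``inducing the same non-linear connection'' combined with strong regularity yields the literal identity $Dy^a_\nabla=Dy^a_{\mathrm{Chern}}$, rather than merely proportional or cohomologous forms. Once this identification is secured the symplectic property is inherited wholesale from the Chern case established above.
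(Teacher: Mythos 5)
Your proposal is correct and follows essentially the paper's own route: the paper likewise reduces to the Chern case by noting that strong regularity forces $Dy^a=\dd y^a+N^a_b\,\omega^b$ (Proposition \ref{zzq}), so that $\Omega$ depends only on the shared non-linear connection, and then uses the displayed identity $\dd(y^a g_{ab}\omega^b)=\Omega$ for Chern (where $T=0$ and $y\cdot Dg=0$) together with Theorem \ref{chp} for non-degeneracy. The only cosmetic difference is that you verify non-degeneracy first and closedness second, which changes nothing of substance.
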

Of course, the Chern, Berwald, Cartan and Hashiguchi connections are of this type. These nice symplectic properties do not distinguish between Finsler connections because they really depend only on the induced non-linear connection. We shall return on this point in the next section.

We end this section by recalling the definitions of some objects that will appear in what follows.

As already mentioned, the  {\em non-linear (Ehresmann) connection} is just a splitting  $TE=VE\oplus HE$ determined by a $n$-dimensional distribution $HE$. In local coordinates this distribution is determined by the coefficients $N^a_b$ introduced previously.  The non-linear connection is said to be  positive homogeneous of degree one if so are the coefficients $N^a_b$. The cocycle of these coefficients under change of coordinates shows that this definition is well posed. We shall use the uppercase letter $N$ to denote a  non-linear connection.

The {\em curvature of the non-linear connection} is a $TM$-valued 2-form on $E$ given by $ e_a\otimes R^a{}_{bc} \tfrac{1}{2} \omega^b \wedge \omega^c$ where
\[
R^a{}_{bc}= \frac{\delta N^a_c}{\delta x^b}- \frac{\delta N^a_c}{\delta x^b}.
\]
Given a Finsler Lagrangian $\mathscr{L}$, we say that the non-linear connection is  Barthel's (we follow the terminology of \cite{laugwitz11}, this non-linear connection is sometimes named after Berwald \cite{minguzzi14c}) if $N^a_c={\p G^a }/{\p y^c} $ where
\begin{equation} \label{vka}
2 G^a(x,y)= g^{ad} \left( \frac{\p^2 \mathscr{L}}{\p x^c \p  y^d } y^c-\frac{\p \mathscr{L}}{\p x^d} \right)
 \end{equation}
 are the spray coefficients entering the geodesic equation determined by $\mathscr{L}$, cf.\ \cite{minguzzi14c}: $\ddot x^a+2 G^a(x,\dot x)=0$ or, equivalently, as a vector field on $E$
\[
G=y^a \frac{\p}{\p x^a}-2G^a(x,y) \frac{\p}{\p y^a}=y^a \left(\frac{\p}{\p x^a}-N^a_b(x,y)\frac{\p}{\p y^a} \right)=y^a \bar \omega_a.
\]
The Barthel non-linear connection is positive homogeneous of degree one.

It is useful to recall the notion of Landsberg tensor.

Given a Finsler Lagrangian $\mathscr{L}$ the {\em Landsberg tensor} is a $T^*M\otimes_M T^*M$-valued 1-form on $E$, $L:=e^a\otimes e^b L_{abc} \omega^c$, whose components are given by \cite[Sec.\ 4.5]{minguzzi14c}
\[
L_{abc}:=-\frac{1}{2} y^r g_{rd} G^d{}_{abc}, \quad \textrm{where} \quad   G^d{}_{abc}:= \frac{\p^3 G^d}{\p y^a\p y^b\p y^c} .
\]
Notice that $L_{abc}$ is totally symmetric and annihilated by $y^a$ due to the positive homogeneity of degree two of $G^a$.

We recall what are the notable Finsler connections of a Finsler Lagrangian $\mathscr{L}$. Let
\[
\Gamma^a{}_{bc}=\tfrac{1}{2} g^{an} \left[\tfrac{\delta}{\delta x^b} \,g_{nc}+\tfrac{\delta}{\delta x^c} \, g_{nb}-\tfrac{\delta}{\delta x^n}\, g_{bc} \right],
\]
where $g_{ab}$ is the vertical Hessian of $\mathscr{L}$ and $N^a_b$ entering  the derivatives $\frac{\delta}{\delta x^b}$ are the coefficients of the Barthel's non-linear connection induced by $\mathscr{L}$. Then the Chern Finsler connection is given by $H^a{}_{bc}:=\Gamma^a{}_{bc}$, $V^a{}_{bc}=0$; the Berwald Finsler connection by $H^a{}_{bc}:= G^a{}_{bc}:= {\p^2 G^a }/{\p y^b\p y^c}$, $V^a{}_{bc}=0$; the Cartan Finsler connection by $H^a{}_{bc}:=\Gamma^a{}_{bc}$, $V^a{}_{bc}=C^a{}_{bc}$; and the Hashiguchi Finsler connection by $H^a{}_{bc}:= G^a{}_{bc}$, $V^a{}_{bc}=C^a{}_{bc}$. All the notable connections are positive homogeneous of degree zero.

The following well known formula will be useful \cite[Eq.\ (50)]{minguzzi14c}
\begin{equation} \label{cgp}
G^a{}_{bc}=\Gamma^a{}_{bc}+L^a{}_{bc}.
\end{equation}

\section{Non-linear connection and its metricity}

In this and the remaining section $g$ will be non-degenerate.

The first important step consists in showing that the Hessian nature of $g$ and the Barthel non-linear connection induced by $\mathscr{L}=\tfrac{1}{2}g_y(y,y)$ can be recovered from compatibility conditions  between the metric and the Finsler connection. These conditions only constrain the Finsler connection to stay inside a certain class of Finsler connections which includes  all the notable connections. In the next sections we shall show how to strengthen the compatibility conditions to recover the Chern's and Cartan's connections.

\begin{definition}
Given the metric $g$, two regular Finsler connections $\nabla$ and $\nabla'$ are said to be equivalent, $\nabla\sim_g \nabla'$ if their connection 1-forms are related by
\begin{equation} \label{syn}
\omega'{}^a{}_b = \omega^a{}_b+ W^a{}_{b}\, \omega^b
\end{equation}
where $W:=e_a\otimes e^b W^a{}_{b}$ is a $TM\otimes_M T^*M$-valued 1-form on $E$ such that  $W_{ab}=W_{ba}$ and $W_{ab} y^b=0$. The classes of this equivalence relation are denoted by $[\nabla]_g$.
\end{definition}

\begin{remark}
The definition is well posed since the difference of two connection is a tensor, and the conditions imposed on $W$ are independent of the coordinate system, indeed they can be expressed as $W(y)=0$, $g(X,W(Y))=g(Y,W(X))$ for every $X,Y\in TM$.

The coefficients $W_{ab}$ are obtained lowering an index, hence using $g$, thus the transformation (\ref{syn}), making use of the symmetry of these coefficients, defines a notion of class $[\nabla]_g$ that does indeed depend on $g$.

We are considering regular connections $\nabla$ that are not necessarily positive homogeneous of degree zero. If we were to restrict ourselves to such connections then $W_{ab}$ would be positive homogeneous of degree zero.

For a  related investigation on Finsler connections \cite[Prop.\ 2.18]{javaloyes20}.
\end{remark}

\begin{remark}
The above proof shows that the vector-valued 1-form $\bar \omega:=Dy=e_a (\dd y^a+\omega^a{}_b y^b)$ is independent of  the chosen element in the class and so is their induced non-linear connection $HE=\textrm{ker} \bar \omega$. Similarly, the validity of the property of strong regularity does not change with the element in the class.
\end{remark}

We shall refer to Eq.\ (\ref{syn}) as {\em the symmetry}. The same transformation in which the condition $W_{ab} y^b$ has been dropped (which implies larger equivalence classes) will be referred as the {\em amplified symmetry} consistently with the terminology in \cite{minguzzi20b} (although there the symmetry was `amplified' with respect to the much smaller projective symmetry).

\begin{definition}
The {\em canonical metric connection} associated to $\nabla$ is the connection $\tilde \nabla$ whose coefficients are
\[
\tilde \omega^a{}_b=\omega^a{}_b+\tfrac{1}{2} g^{ar} D g_{rb}.
\]
\end{definition}
It is easy to check that it is indeed metric: $\tilde D g=0$.
The following result is obvious
\begin{proposition}
The connections $\nabla$ and $\tilde \nabla$ belong to the same class $[\nabla]_g$ iff $ y \cdot D g=0$. All connections $\nabla$ in the same class share the same canonical metric connection, so one can speak of canonical metric connection of $[\nabla]_g$.
\end{proposition}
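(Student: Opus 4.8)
The plan is to reduce both assertions to a single object, the difference tensor $W:=\tilde\nabla-\nabla$. Writing $\tilde\omega^a{}_b=\omega^a{}_b+W^a{}_b$, the definition of $\tilde\nabla$ gives $W^a{}_b=\tfrac12 g^{ar}Dg_{rb}$, and lowering the index with $g$ yields at once $W_{ab}=\tfrac12 Dg_{ab}$. The first thing I would record is that this $W_{ab}$ is \emph{automatically} symmetric: since $g_{ab}=g_{ba}$ and covariant differentiation preserves the symmetry of a bilinear form, one has $Dg_{ab}=Dg_{ba}$. Hence, of the two conditions defining $\sim_g$ — the symmetry $W_{ab}=W_{ba}$ and the annihilation $W(y)=0$ — only the second is not automatic for the pair $(\nabla,\tilde\nabla)$.

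For the first assertion I would then simply unwind $W(y)=0$. This reads $W^a{}_b y^b=\tfrac12 g^{ar}Dg_{rb}y^b=0$, and since $g$ is nondegenerate it is equivalent to $Dg_{rb}y^b=0$, that is to $y\cdot Dg=0$ (the slot on which $y$ is contracted being immaterial by symmetry of $Dg$). Combined with the automatic symmetry of $W_{ab}$ this proves $\nabla\sim_g\tilde\nabla$ iff $y\cdot Dg=0$. I would add one remark to make the ``only if'' airtight: when $y\cdot Dg\neq 0$ we have $W(y)\neq 0$, whence $\bar\omega'=Dy+W(y)\neq\bar\omega$; as every element of a fixed class induces the same $\bar\omega$, this shows $\tilde\nabla\notin[\nabla]_g$, regardless of whether $\tilde\nabla$ is itself regular.

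For the second assertion, let $\nabla'\sim_g\nabla$ with difference $W$ (symmetric, $W(y)=0$), and compute the canonical metric connection of $\nabla'$. Inserting $\omega'^c{}_r=\omega^c{}_r+W^c{}_r$ into the definition of $D'$ gives $D'g_{rb}=Dg_{rb}-W_{br}-W_{rb}$, and the symmetry of $W$ collapses this to $D'g_{rb}=Dg_{rb}-2W_{rb}$. Substituting into $\tilde\omega'^a{}_b=\omega'^a{}_b+\tfrac12 g^{ar}D'g_{rb}$ and using $g^{ar}W_{rb}=W^a{}_b$, the two $W$-contributions cancel, leaving $\tilde\omega'^a{}_b=\omega^a{}_b+\tfrac12 g^{ar}Dg_{rb}=\tilde\omega^a{}_b$. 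Thus $\tilde\nabla'=\tilde\nabla$, and the canonical metric connection is a class invariant.

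The calculation is routine, so there is no real obstacle; the one point I would flag is that the cancellation in the second part uses only the symmetry $W_{ab}=W_{ba}$ and never the condition $W(y)=0$. It is therefore the symmetry requirement built into $\sim_g$ — not the homogeneity or the annihilation of $y$ — that is responsible for the canonical metric connection depending only on $[\nabla]_g$; the same computation in fact shows invariance under the larger amplified symmetry as well.
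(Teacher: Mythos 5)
Your proposal is correct and follows essentially the same route as the paper: the paper likewise reduces everything to the difference tensor, treats the first claim as immediate (your unwinding of $W_{ab}=\tfrac12 Dg_{ab}$ being exactly that), and proves the second claim by the same cancellation $\tilde\omega'{}_{ab}=\omega_{ab}+w_{ab}+\tfrac12(Dg_{ab}-w_{ab}-w_{ba})=\tilde\omega_{ab}$. Your closing observation that only the symmetry $W_{ab}=W_{ba}$ is used, so that the canonical metric connection is in fact invariant under the amplified symmetry, is also made in the paper in the remark immediately following its proof.
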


\begin{proof}
The first statement is obvious. For the second statement, let $\nabla,\nabla'\in [\nabla]_g$
\[
\tilde \omega'{}_{ab}=\omega'{}_{ab}+\tfrac{1}{2}  D' g_{ab}=\omega{}_{ab}+w_{ab}+\tfrac{1}{2}  (D g_{ab}-w_{ab}-w_{ba})=\tilde \omega{}_{ab}
\]
\end{proof}

Notice that the proof shows that the canonical metric connection is really invariant under the amplified symmetry.

\begin{proposition}
If $y \cdot Dg\underset{\boldsymbol{\cdot}}{\wedge}\omega=0$ we have that $\tilde \Omega=\Omega$ and that $\nabla$ is regular iff $\tilde \nabla$ is regular.
If $y \cdot Dg=0$ we have that $\nabla$ is strongly regular iff $\tilde \nabla$ is strongly regular.
\end{proposition}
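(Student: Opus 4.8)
The plan is to exploit the two key relationships established earlier: first, that $\tilde\nabla$ lies in the class $[\nabla]_g$ precisely when $y\cdot Dg=0$ (from the Proposition immediately preceding); and second, that regularity, strong regularity, and the form $\Omega$ are all invariants of the class $[\nabla]_g$ (from the Remark following the definition of the symmetry, which records that $\bar\omega=Dy$ and the induced non-linear connection are class invariants, and that strong regularity is a class invariant). Concretely I would write the canonical metric connection coefficients as $\tilde\omega^a{}_b=\omega^a{}_b+\tfrac12 g^{ar}Dg_{rb}$ and split the added tensor $\tfrac12 g^{ar}Dg_{rb}$ into its $\omega^c$ and $\bar\omega^c$ parts using the basis $\{\omega^c,\bar\omega^c\}$ guaranteed by regularity of $\nabla$.

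For the first statement, the hypothesis $y\cdot Dg\underset{\boldsymbol{\cdot}}{\wedge}\omega=0$ is exactly the condition that controls the $\omega^c$-component of the tensor that separates $\nabla$ from $\tilde\nabla$ after contracting with $y$. I would contract the defining relation $\bar{\tilde\omega}^a=\bar\omega^a+\tfrac12 g^{ar}(Dg_{rb})y^b$ and show that under $y\cdot Dg\underset{\boldsymbol{\cdot}}{\wedge}\omega=0$ the correction term contributes only along $\bar\omega$, so that $\ker\bar{\tilde\omega}\cap\ker\omega$ coincides with $\ker\bar\omega\cap\ker\omega$; by the regularity criterion $\ker\bar\omega\cap\ker\omega=0$ of the introduction, the two regularity conditions are equivalent. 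For the equality $\tilde\Omega=\Omega$, I would compute $\tilde\Omega=g_{ab}\bar{\tilde\omega}^a\wedge\omega^b=\Omega+\tfrac12(Dg_{rb})y^b\wedge\omega$-type terms and observe that the correction is $y\cdot Dg\underset{\boldsymbol{\cdot}}{\wedge}\omega$ wedged appropriately against $\omega$, which vanishes by hypothesis; here the symmetry and tracelessness built into the class relation make the surviving terms cancel.

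For the second statement, under the stronger hypothesis $y\cdot Dg=0$ the canonical metric connection $\tilde\nabla$ belongs to $[\nabla]_g$ by the preceding Proposition. Strong regularity is a property shared by all connections in a common class (this is exactly the content of the Remark following the symmetry definition, since the map $Z\mapsto\nabla_Z y$ restricted to $VE$ is unchanged when $\omega^a{}_b$ is altered only by terms proportional to $\omega^c$, which annihilate vertical vectors). Hence $\nabla$ is strongly regular iff $\tilde\nabla$ is.

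The main obstacle I anticipate is bookkeeping in the first statement: the hypothesis $y\cdot Dg\underset{\boldsymbol{\cdot}}{\wedge}\omega=0$ is weaker than $y\cdot Dg=0$, so $\tilde\nabla$ need not lie in $[\nabla]_g$ and I cannot simply invoke class-invariance. I must instead argue directly that the $y$-contracted correction term is purely $\bar\omega$-valued, which is what $y\cdot Dg\underset{\boldsymbol{\cdot}}{\wedge}\omega=0$ says (it kills the $\omega^c$-components of $(y\cdot Dg)$), and then verify that adding a multiple of $\bar\omega^a$ to $\bar\omega^a$ via an invertible shift preserves the condition $\ker\bar\omega\cap\ker\omega=0$. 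Checking that this shift is invertible — equivalently that the correction does not accidentally make $\bar{\tilde\omega}$ degenerate along $VE$ — is the delicate point, and I expect it to follow from the non-degeneracy of $g$ together with the homogeneity structure, but it is where the careful computation is genuinely required.
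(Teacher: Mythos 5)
Your handling of the second statement and of the equality $\tilde\Omega=\Omega$ is correct and essentially identical to the paper's: one computes $\tilde\Omega=\Omega+\tfrac{1}{2}\, y\cdot Dg\underset{\boldsymbol{\cdot}}{\wedge}\omega$, and under $y\cdot Dg=0$ the canonical metric connection lies in $[\nabla]_g$, so class-invariance of strong regularity (the Remark you cite) finishes the job.

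The genuine problem is the mechanism you propose for the regularity equivalence. You claim that $y\cdot Dg\underset{\boldsymbol{\cdot}}{\wedge}\omega=0$ ``kills the $\omega^c$-components of $y\cdot Dg$'', so that the correction to $\bar\omega^a$ is ``purely $\bar\omega$-valued''. This is backwards. Writing $(y\cdot Dg)_b=S_{bc}\,\omega^c+B_{bc}\,\bar\omega^c$ (legitimate when $\nabla$ is regular), the hypothesis reads
\[
0=(y\cdot Dg)_b\wedge\omega^b=S_{bc}\,\omega^c\wedge\omega^b+B_{bc}\,\bar\omega^c\wedge\omega^b ,
\]
which forces $B_{bc}=0$ (the $\bar\omega^c\wedge\omega^b$ are all independent) and $S_{[bc]}=0$: by Cartan's lemma it is the $\bar\omega$-components that die, and the correction $\tfrac{1}{2}g^{ar}(y\cdot Dg)_r$ lies along the $\omega^c$. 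One can also see this without invoking any basis (hence without presupposing regularity of $\nabla$): evaluating the 2-form $y\cdot Dg\underset{\boldsymbol{\cdot}}{\wedge}\omega$ on a pair $(X,Y)$ with $X$ vertical gives $(y\cdot Dg)_b(X)\,\omega^b(Y)=0$ for all $Y$, hence $y\cdot Dg$ annihilates $VE$. This sign-flip is not cosmetic; it is precisely what dissolves the ``delicate point'' you flag at the end. Since the correction vanishes on $VE=\ker\omega$, one has $\bar{\tilde\omega}\vert_{VE}=\bar\omega\vert_{VE}$, so the maps $Z\mapsto\tilde\nabla_Z y$ and $Z\mapsto\nabla_Z y$ coincide on vertical vectors and the two regularity (injectivity) conditions are trivially equivalent --- no invertibility of any shift is needed. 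Under your reading, by contrast, the correction would be $\bar{\tilde\omega}^a=(\delta^a_c+M^a_c)\bar\omega^c$ and the invertibility of $\delta+M$ would genuinely be required; nothing in the hypotheses (neither non-degeneracy of $g$ nor homogeneity) supplies it, so that route is not merely delicate but stuck. Note finally that the paper sidesteps the decomposition entirely: it proves $\tilde\Omega=\Omega$ and then invokes Theorem \ref{chp} (non-degeneracy of $\Omega$ is equivalent to regularity of $\nabla$ together with non-degeneracy of $g$), which transfers regularity between $\nabla$ and $\tilde\nabla$ in one stroke; your kernel argument, once corrected as above, is an acceptable and even more elementary alternative.
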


Under the former weaker condition the non-linear connections induced by $\nabla$ and $\tilde \nabla$ do not need to coincide.

\begin{proof}
We have
\[
\tilde \Omega=
\tilde D y \underset{\boldsymbol{\cdot}}{\wedge} (g \cdot \omega)= D y \underset{\boldsymbol{\cdot}}{\wedge} (g \cdot \omega)+ \tfrac{1}{2} y \cdot Dg \wedge \omega =\Omega+ \tfrac{1}{2} y \cdot Dg \wedge \omega
\]
thus if $\tfrac{1}{2} y \cdot Dg \wedge \omega =0$ we have $\tilde\Omega=\Omega$ and so their property of being non-degenerate coincide, and so, by Thm.\ \ref{chp} the regularity properties for $\tilde \nabla$ and $\nabla$ coincide. The last statement follows from the fact that $\nabla$ and $\tilde \nabla$ belong to the same class.
\end{proof}

\begin{definition}
The {\em canonical horizontal torsion} of $\nabla$ is the $TM$-valued 2-form on $E$, $\Psi=e_a \Psi^a$ where
\begin{equation} \label{vop}
\Psi^a:=T^a+ \tfrac{1}{2} g^{ar} D g_{rb} \wedge \omega^b
\end{equation}
The {\em canonical vertical torsion} of $\nabla$ is the $TM$-valued 2-form on $E$, $\bar \Psi=e_a \bar \Psi^a$ where
\begin{align} \label{vdp}
\bar\Psi^a:&=\bar T^a+ \tfrac{1}{2} g^{ar} D g_{rb} \wedge \bar \omega^b  + D(\tfrac{1}{2} g^{ar} D g_{rb} y^b)+  \tfrac{1}{4} g^{ar} D g_{rb} \wedge  g^{bs} D g_{sc} y^c
\end{align}
(notice that the last two terms vanish if $D g_{ab} y^b=0$.)\\
The {\em canonical curvature} of $\nabla$ is the $TM\otimes T^*M$-valued 2-form on $E$ given by $\tilde R=e_a\otimes e^a \tilde R^a{}_b$ where
\begin{equation}
\tilde R_{ab}=R_{[ab]}-\tfrac{1}{4} Dg_{ac} \wedge g^{cs} D g_{sb}.
\end{equation}
\end{definition}

\begin{theorem}
Let the pair $(g,\nabla)$ be given. The canonical horizontal torsion and the canonical curvature are invariant under amplified symmetries. The canonical vertical torsion is invariant under symmetries (\ref{syn}).
They are the horizontal torsion, curvature and vertical torsion  of the canonical metrical connection.
\end{theorem}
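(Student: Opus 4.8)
\emph{Overall strategy.} The plan is to prove the last sentence first, identifying $\Psi$, $\bar\Psi$ and $\tilde R$ with the horizontal torsion, the vertical torsion and the curvature of the canonical metric connection $\tilde\nabla$, and then to read off the two invariance statements at once from the fact---already recorded above---that $\tilde\nabla$ is invariant under the amplified symmetry. Throughout I abbreviate $A^a{}_b:=\tfrac12 g^{ar}Dg_{rb}$, so that $\tilde\omega^a{}_b=\omega^a{}_b+A^a{}_b$ by definition and $A_{ab}:=g_{ac}A^c{}_b=\tfrac12 Dg_{ab}$; here $A$ is an endomorphism-valued $1$-form and $D$ denotes the exterior covariant derivative of $\nabla$.

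\emph{Horizontal torsion.} This is immediate: the soldering form $\omega=e_a\,\omega^a$ does not involve the connection, so $\tilde T^a=\tilde D\omega^a=\dd\omega^a+\tilde\omega^a{}_b\wedge\omega^b=T^a+A^a{}_b\wedge\omega^b$, which is exactly $\Psi^a$ of (\ref{vop}).

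\emph{Curvature.} Inserting $\tilde\omega=\omega+A$ into $\tilde R^a{}_b=\dd\tilde\omega^a{}_b+\tilde\omega^a{}_c\wedge\tilde\omega^c{}_b$ and grouping the terms gives $\tilde R^a{}_b=R^a{}_b+DA^a{}_b+A^a{}_c\wedge A^c{}_b$. I then lower the free index with $g$, using two facts. Differentiating $g^{cr}g_{rs}=\delta^c_s$ yields $Dg^{cr}=-g^{cs}g^{rt}Dg_{st}$, which turns $g_{ac}DA^c{}_b$ into $\tfrac12 DDg_{ab}-\tfrac12 Dg_{ac}\wedge g^{cs}Dg_{sb}$; and the Ricci identity applied to the tensor $g$, namely $D^2g_{ab}=-R^c{}_a g_{cb}-R^c{}_b g_{ac}=-2R_{(ab)}$, gives $R_{ab}+\tfrac12 DDg_{ab}=R_{[ab]}$. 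Since moreover $A_{ac}\wedge A^c{}_b=\tfrac14 Dg_{ac}\wedge g^{cs}Dg_{sb}$, the $DDg$ contributions cancel and the two quadratic pieces combine to $-\tfrac14$, leaving $\tilde R_{ab}=R_{[ab]}-\tfrac14 Dg_{ac}\wedge g^{cs}Dg_{sb}$, as required.

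\emph{Vertical torsion and invariance.} Setting $\tilde{\bar\omega}^a:=\tilde D y^a=\bar\omega^a+v^a$ with $v^a:=A^a{}_b y^b$, one computes $\tilde{\bar T}^a=\tilde D\tilde{\bar\omega}^a=(\bar T^a+A^a{}_c\wedge\bar\omega^c)+(Dv^a+A^a{}_c\wedge v^c)$; recognising $Dv^a=D(\tfrac12 g^{ar}Dg_{rb}y^b)$ and $A^a{}_c\wedge v^c=\tfrac14 g^{ar}Dg_{rb}\wedge g^{bs}Dg_{sc}y^c$ reproduces term by term the four summands of $\bar\Psi^a$ in (\ref{vdp}). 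With the three identifications in hand the invariance statements follow: since $\tilde\nabla$ is invariant under the amplified symmetry, so are its curvature $\tilde R$ and its horizontal torsion $\tilde D\omega$ (the form $\omega$ being connection-independent), while its vertical torsion $\tilde D\tilde D y$ inherits invariance under (\ref{syn}). The genuine work is concentrated in the curvature step---the lowering manipulation together with the identity $D^2 g_{ab}=-2R_{(ab)}$---where the sign conventions for the exterior covariant derivative of $1$-forms and of the inverse metric must be tracked with care; one should also verify precisely the symmetry class under which each object is invariant, the vertical torsion being the delicate case since it is built from $\tilde D y$ rather than from the connection-independent $\omega$.
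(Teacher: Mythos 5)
Your proposal is correct and takes essentially the same route as the paper: you identify $\Psi$, $\bar\Psi$ and $\tilde R$ with the horizontal torsion, vertical torsion and curvature of the canonical metric connection $\tilde\nabla$ by the same direct expansion of $\tilde\omega^a{}_b=\omega^a{}_b+\tfrac{1}{2}g^{ar}Dg_{rb}$ (including the $Dg^{ar}=-g^{as}g^{rt}Dg_{st}$ step and the Ricci identity $D^2g_{ab}=-2R_{(ab)}$ that the paper uses implicitly), and you then deduce the invariance claims from the previously recorded amplified-symmetry invariance of $\tilde\nabla$. The only difference is the order of the two halves (identification first, invariance second, versus the paper's reverse order), which is immaterial.
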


Notice that $\bar{\tilde{\omega}}^a:=\tilde D y^a=\bar \omega^a+\tfrac{1}{2} g^{ar} D g_{rb} y^b$.

\begin{proof}
The horizontal torsion and curvature of the canonical metrical connection are constructed using $\tilde D$ and $\omega$ that are amplified symmetry invariant, so they share the same property. Similarly, the vertical torsion is constructed using $\tilde D$ and $\bar{\tilde{\omega}}$ that are  symmetry invariant so it shares the same property.
The canonical horizontal torsion is
\begin{align*}
\Psi^a&= \tilde D \omega^a=\dd \omega^a+ \tilde \omega^a{}_b \wedge \omega^b =\dd \omega^a+\omega^a{}_b \wedge \omega^b +\tfrac{1}{2} g^{ar} D g_{rb} \wedge \omega^b\\
&= D \omega^a+\tfrac{1}{2} g^{ar} D g_{rb} \wedge \omega^b=T^a+ \tfrac{1}{2} g^{ar} D g_{rb} \wedge \omega^b
\end{align*}
The canonical vertical torsion is
\begin{align*}
\bar \Psi^a&= \tilde D \bar{\tilde{\omega}}^a=\dd \bar{\tilde{\omega}}^a+ \tilde \omega^a{}_b \wedge \bar{\tilde{\omega}}^b =\dd \bar{\tilde{\omega}}^a+\omega^a{}_b \wedge \bar{\tilde{\omega}}^b +\tfrac{1}{2} g^{ar} D g_{rb} \wedge \bar{\tilde{\omega}}^b\\
&= D \bar{\tilde{\omega}}^a+\tfrac{1}{2} g^{ar} D g_{rb} \wedge \bar{\tilde{\omega}}^b\\
&=\bar T^a+ D(\tfrac{1}{2} g^{ar} D g_{rb} y^b)+ \tfrac{1}{2} g^{ar} D g_{rb} \wedge \bar \omega^b+ \tfrac{1}{4} g^{ar} D g_{rb} \wedge  g^{bs} D g_{sc} y^c
\end{align*}
The canonical curvature is
\begin{align*}
\tilde R^a{}_b&= \dd \tilde \omega^a{}_b+\tilde \omega^a{}_c \wedge \tilde \omega^c{}_b=R^a{}_b +\dd( \tfrac{1}{2} g^{ar} D g_{rb})\\
&\quad +\tfrac{1}{2} g^{ar} D g_{rc} \wedge \omega^c{}_b+\omega^a{}_c\wedge \tfrac{1}{2} g^{cr} D g_{rb}+\tfrac{1}{4} g^{ar} D g_{rc} \wedge g^{cs} D g_{sb}\\
&=R^a{}_b+D(\tfrac{1}{2} g^{ar} Dg_{rb})+\tfrac{1}{4} g^{ar} D g_{rc} \wedge g^{cs} D g_{sb} \\
&=R^a{}_b+\tfrac{1}{2} g^{ar} D^2 g_{rb}-\tfrac{1}{4} g^{ar} D g_{rc} \wedge g^{cs} D g_{sb}\\
&=\tfrac{1}{2} (R^a{}_b- R_b{}^a)-\tfrac{1}{4} g^{ar} D g_{rc} \wedge g^{cs} D g_{sb}
\end{align*}
\end{proof}

\begin{proposition}
Suppose that $y \cdot Dg=0$. We have the following identity
\begin{equation} \label{vvo}
\dd \Omega=\bar \Psi \underset{\boldsymbol{\cdot}}{\wedge} (g \cdot  \omega)- \Psi \underset{\boldsymbol{\cdot}}{\wedge} (g \cdot  \bar \omega).
\end{equation}
\end{proposition}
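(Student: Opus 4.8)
The plan is to read the claimed identity as the structure equation of the \emph{canonical metric connection} $\tilde\nabla$. By the preceding theorem $\Psi^a=\tilde D\omega^a$ and $\bar\Psi^a=\tilde D\bar{\tilde\omega}^a$ are its horizontal and vertical torsions, and its defining feature is $\tilde Dg=0$. Since $\Omega=g_{ab}\bar\omega^a\wedge\omega^b$ is scalar-valued, its exterior derivative equals its covariant exterior derivative for \emph{any} Finsler connection; the plan is to evaluate it with $\tilde\nabla$. All connection $1$-form contributions cancel automatically because $\Omega$ is a full contraction, so Leibniz plus $\tilde Dg=0$ give $\dd\Omega=\tilde D\Omega=g_{ab}(\tilde D\bar\omega^a)\wedge\omega^b-g_{ab}\bar\omega^a\wedge(\tilde D\omega^b)=g_{ab}(\tilde D\bar\omega^a)\wedge\omega^b-g_{ab}\Psi^a\wedge\bar\omega^b$, where in the last term I used that a $2$-form and a $1$-form commute and that $g$ is symmetric. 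The term $-g_{ab}\Psi^a\wedge\bar\omega^b$ is already $-\Psi\underset{\boldsymbol{\cdot}}{\wedge}(g\cdot\bar\omega)$.

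It then remains to identify $g_{ab}(\tilde D\bar\omega^a)\wedge\omega^b$ with $\bar\Psi\underset{\boldsymbol{\cdot}}{\wedge}(g\cdot\omega)=g_{ab}\bar\Psi^a\wedge\omega^b$. The subtlety here is that $\bar\Psi^a=\tilde D\bar{\tilde\omega}^a$ is built from $\bar{\tilde\omega}^a=\tilde Dy^a$ rather than from $\bar\omega^a=Dy^a$, and the two differ by the almost-metricity vector $\phi^a:=\tfrac12 g^{ar}Dg_{rb}y^b$, namely $\bar{\tilde\omega}^a=\bar\omega^a+\phi^a$, as recorded after the theorem. Hence $\tilde D\bar\omega^a=\bar\Psi^a-\tilde D\phi^a$, and substituting yields $\dd\Omega=\bar\Psi\underset{\boldsymbol{\cdot}}{\wedge}(g\cdot\omega)-\Psi\underset{\boldsymbol{\cdot}}{\wedge}(g\cdot\bar\omega)-g_{ab}(\tilde D\phi^a)\wedge\omega^b$. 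The claim is therefore equivalent to the vanishing of the correction $g_{ab}(\tilde D\phi^a)\wedge\omega^b$.

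The hard part is exactly this correction, and I expect it to be the main obstacle. Setting $\psi_b:=g_{ab}\phi^a=\tfrac12 Dg_{bc}y^c$ and using $\tilde Dg=0$ to commute $g_{ab}$ through $\tilde D$, the correction becomes $\tilde D\psi_b\wedge\omega^b$, so everything reduces to showing $\tilde D\psi_b\wedge\omega^b=0$. I would expand $\tilde D\psi_b=(\tilde D A_{bc})\,y^c-A_{bc}\wedge\bar{\tilde\omega}^c$ with $A_{bc}:=\tfrac12 Dg_{bc}$ symmetric, and rewrite $\tilde D A_{bc}$ through the curvature by means of $D^2g_{bc}$; the antisymmetry of the canonical curvature $\tilde R_{ab}$ proved in the theorem, together with the first Bianchi identity $DT=R\underset{\boldsymbol{\cdot}}{\wedge}\omega$, is what would be needed to play the second-order contraction $D^2g_{bc}\,y^c\wedge\omega^b$ off against the first-order one $Dg_{bc}\wedge\bar{\tilde\omega}^c\wedge\omega^b$. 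This is the only genuinely nontrivial cancellation in the argument; everything else is index bookkeeping using the symmetry $Dg_{bc}=Dg_{cb}$ (i.e.\ $W_{ab}=W_{ba}$). As a consistency check I would first verify the identity in the strongly/almost metric-compatible situation $Dg_{ab}y^b=0$, equivalently $\phi=0$: there the correction $g_{ab}(\tilde D\phi^a)\wedge\omega^b$ vanishes term by term, the distinction between $\bar\omega$ and $\bar{\tilde\omega}$ disappears, and the structure equation of the first two paragraphs closes immediately. This is the case of all the notable connections (Chern, Cartan, Berwald, Hashiguchi), for which $C_{abc}y^c=0$ forces $y\cdot Dg=0$.
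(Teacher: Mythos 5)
Your first two paragraphs are correct, and they reproduce in substance the paper's own proof: the paper expands $\dd\Omega=\dd(g_{ab}\bar\omega^a\wedge\omega^b)$ by Leibniz (with $D$ rather than your $\tilde D$, which is equivalent since $\Omega$ is a full contraction) and regroups it as
\[
\dd\Omega=\bigl(g_{ab}\bar T^a+\tfrac{1}{2}Dg_{ab}\wedge\bar\omega^a\bigr)\wedge\omega^b-\Psi\underset{\boldsymbol{\cdot}}{\wedge}(g\cdot\bar\omega),
\]
whose first block is exactly your $g_{ab}(\tilde D\bar\omega^a)\wedge\omega^b$, and it stops there. The genuine gap is your third paragraph: the correction $g_{ab}(\tilde D\phi^a)\wedge\omega^b$ is \emph{not} identically zero, so no combination of the antisymmetry of $\tilde R_{ab}$ and the Bianchi identities can make it vanish. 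Concretely, take $M=\mathbb{R}^2$, the flat connection $\omega^a{}_b=0$ (so $T=\bar T=R=0$, $\bar\omega^a=\dd y^a$, and regularity holds) and $g_{ab}=f(x)\,\delta_{ab}$ with $\dd f\neq0$. Then $\phi^a=\tfrac{y^a}{2f}\,\dd f$, $\tilde D\phi^a=\tfrac{1}{2f}\,\dd y^a\wedge \dd f$, hence
\[
g_{ab}(\tilde D\phi^a)\wedge\omega^b=-\tfrac{1}{2}\,\dd f\wedge\bigl(\dd y^1\wedge \dd x^1+\dd y^2\wedge \dd x^2\bigr)\neq0 .
\]
Indeed in this example the four-term $\bar\Psi^a$ of Eq.\ (\ref{vdp}) vanishes identically (the second and third terms cancel), while $\dd\Omega=\dd f\wedge(\dd y^1\wedge \dd x^1+\dd y^2\wedge \dd x^2)\neq 0$ and $-\Psi\underset{\boldsymbol{\cdot}}{\wedge}(g\cdot\bar\omega)=\tfrac{1}{2}\,\dd\Omega$; so with the full $\bar\Psi$ the claimed identity would read $\dd\Omega=\tfrac{1}{2}\dd\Omega$.

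What you have uncovered is therefore a real imprecision in the statement, but it must be resolved in the opposite direction from the one you attempt. The identity that is true in general --- and that both the paper's displayed computation and your first two paragraphs actually prove --- is (\ref{vvo}) with $\bar\Psi^a$ understood as $\tilde D\bar\omega^a=\bar T^a+\tfrac{1}{2}g^{ar}Dg_{rb}\wedge\bar\omega^b$, i.e.\ only the first two terms of Eq.\ (\ref{vdp}). With the full $\bar\Psi=\tilde D\bar{\tilde\omega}$ the identity holds precisely when your correction is absent, e.g.\ under $y\cdot Dg=0$, which kills $\phi$ and the last two terms of (\ref{vdp}); this is condition ($\varepsilon$), which is in force in the only place where the proposition is subsequently used (the derivation of Eq.\ (\ref{riv})), and it is exactly your $\phi=0$ consistency check. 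So the correct completion of your argument is not to prove that $g_{ab}(\tilde D\phi^a)\wedge\omega^b=0$ --- that cannot be done --- but either to restate the proposition with $\tilde D\bar\omega$ in place of $\bar\Psi$, or to add the hypothesis $y\cdot Dg=0$, under which $\bar{\tilde\omega}=\bar\omega$ and your first two paragraphs already close the proof.
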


\begin{proof}
\begin{align*}
\dd \Omega&= \dd ( \bar \omega^a  g_{ab} \wedge \omega^b)=\bar T^a  g_{ab} \wedge \omega^b-\bar \omega^a \wedge  D g_{ab} \wedge \omega^b - \bar \omega^a  g_{ab} \wedge T^b\\
&=(\bar T^a  g_{ab}+ \tfrac{1}{2}D g_{ab} \wedge \bar \omega^a  )\wedge \omega^b- \bar \omega^a  \wedge (  \tfrac{1}{2}D g_{ab} \wedge \omega^b+ g_{ab} T^b)
\end{align*}
\end{proof}

\begin{theorem} \label{xkj}
Supppose that the pair $(g,\nabla)$ satisfies the following conditions
\begin{itemize}
\item[($\alpha$)] Regularity
\item[($\beta$)] $\Psi \underset{\boldsymbol{\cdot}}{\wedge}  (g \cdot \omega)=0$,
\item[($\gamma$)]   $\Psi \cdot g \cdot y=0$,
\end{itemize}
then $g$ is the vertical Hessian of a Finsler Lagrangian $\mathscr{L}\colon E\to \mathbb{R}$, $\mathscr{L}=\tfrac{1}{2} g_y(y,y)$ (hence positively homogeneous of degree two).
Suppose that additionally
\begin{itemize}
\item[($\delta$)] $\Psi \underset{\boldsymbol{\cdot}}{\wedge} (g \cdot \bar  \omega)=0$,
\item[($\varepsilon$)]   $y \cdot Dg=0$,
\end{itemize}
then
\begin{equation} \label{dxd}
H_{abc}=\Gamma_{abc}+\Lambda_{abc},
\end{equation}
where $\Lambda_{abc}=\Lambda_{bac}$, $y^a \Lambda_{abc}=0$, strong regularity holds, $V_{abc}= V_{bac}$, $y^a  V_{abc}=0$ and the non-linear connection induced by $\nabla$ is Barthel's.

 All conditions ($\alpha$)-($\varepsilon$) are invariant under (\ref{syn})
 so $V_{abc}$ and $\Lambda_{abc}$ are no further constrained by these conditions. The Finsler connections that satisfy ($\alpha$)-($\varepsilon$) belong to a class $[\nabla]_g$ that contains
 all the notable connections, namely Berwald, Chern, Cartan and Hashiguchi.
\end{theorem}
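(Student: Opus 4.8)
The plan is to work throughout with the canonical metric connection $\tilde\nabla$, for which $\tilde D g=0$ and whose horizontal torsion is exactly $\Psi$, and to read ($\beta$)--($\delta$) as identities between $3$- and $2$-forms expanded in the coframe $\{\omega^a,\tilde\omega^a\}$ that regularity ($\alpha$) makes available. Writing $\tilde\omega^a{}_b=\widehat H^a{}_{bc}\,\omega^c+\widehat V^a{}_{bc}\,\tilde\omega^c$ and using $\omega^a=\pi^*(\dd x^a)$, one has $\Psi^a=\tilde D\omega^a=\tilde\omega^a{}_b\wedge\omega^b$, so $\Psi$ has no $\tilde\omega\wedge\tilde\omega$ part: its horizontal--horizontal part is carried by $\widehat H^a{}_{[bc]}$ and its horizontal--vertical part has lowered coefficient $\widehat V_{abc}$. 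Reading the vertical component of $\tilde D g_{ab}=0$ yields the Koszul-type identity $\partial_c g_{ab}=\widehat V_{abc}+\widehat V_{bac}$, the bridge between $\tilde\nabla$ and the vertical derivative of $g$.

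\emph{First assertion.} I would split ($\beta$) and ($\gamma$) into components. The $\tilde\omega\wedge\omega\wedge\omega$ part of ($\beta$) forces $\widehat V_{abc}=\widehat V_{bac}$, and with the Koszul identity this gives $\widehat V_{abc}=\tfrac12\partial_c g_{ab}$; the corresponding part of ($\gamma$) gives $y^a\widehat V_{abc}=0$, i.e. $y^a\partial_c g_{ab}=0$. These two facts suffice to verify by direct differentiation that $\mathscr L:=\tfrac12 g_{ab}y^ay^b$ has vertical Hessian $g$: in $\partial_a\partial_b\mathscr L$ the first--order cross terms $(\partial_b g_{ad})y^d$ and $(\partial_a g_{bd})y^d$ are contractions of $\partial g$ against $y$ on a metric index, hence vanish, while the second--order term reduces, after differentiating $y^a\partial_c g_{ab}=0$ once, to the same vanishing contraction; thus $\partial_a\partial_b\mathscr L=g_{ab}$. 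The pleasant point is that total symmetry of $\partial_c g_{ab}$ and the homogeneities (degree $0$ of $g$, degree $2$ of $\mathscr L$) are never assumed: they follow \emph{a posteriori}, total symmetry because $g$ is now a Hessian, and $y^c\partial_c g_{ab}=0$ by combining total symmetry with $y^a\partial_c g_{ab}=0$.

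\emph{Second assertion.} First I secure strong regularity. The same part of ($\beta$) gives $V_{abc}=V_{bac}$ (equivalently $\tilde V_{[ab]c}=0$); feeding $y^a\partial_c g_{ab}=0$ into the vertical component of ($\varepsilon$), $y\cdot D g=0$, and using this symmetry yields $y^a\tilde V_{abc}=0$, hence $y^aV_{abc}=0$ and, by Proposition~\ref{zzq}, strong regularity, so that $\bar\omega=\tilde\omega$ and $N^a_c=H^a{}_{bc}y^b$. With $\bar\omega=\tilde\omega$, the $\omega\wedge\omega\wedge\tilde\omega$ part of ($\delta$) forces $\widehat H_{a[bc]}=0$; this vanishing of the horizontal torsion of $\tilde\nabla$ together with horizontal metricity gives $\widehat H_{abc}=\Gamma_{abc}$, and translating through $\tilde\omega^a{}_b=\omega^a{}_b+\tfrac12 g^{ar}D g_{rb}$ gives $\widehat H_{abc}=H_{[ab]c}+\tfrac12\tfrac{\delta}{\delta x^c}g_{ab}$, whence $H_{[ab]c}=\Gamma_{[ab]c}$ and $\Lambda_{abc}:=H_{abc}-\Gamma_{abc}$ is symmetric in $a,b$. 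Contracting the horizontal component of ($\varepsilon$) with $y$, namely $y^a(H_{abc}+H_{bac})=y^a\tfrac{\delta}{\delta x^c}g_{ab}$, and subtracting the identity $\Gamma_{abc}+\Gamma_{bac}=\tfrac{\delta}{\delta x^c}g_{ab}$ gives $y^a(\Lambda_{abc}+\Lambda_{bac})=0$, hence $y^a\Lambda_{abc}=0$ by the symmetry; equivalently $\Lambda^a{}_{bc}y^b=0$, so $N^a_c=\Gamma^a{}_{bc}y^b$.

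\emph{Main obstacle and closing.} The delicate step is to recognise $N^a_c=\Gamma^a{}_{bc}y^b$ as Barthel's connection. I would insert the relation between the $\delta/\delta x$--Christoffels and the ordinary ones, $\Gamma^a{}_{bc}=\gamma^a{}_{bc}-g^{ad}\big(N^e_bC_{dce}+N^e_cC_{dbe}-N^e_dC_{bce}\big)$ (with $\gamma^a{}_{bc}$ the Christoffel symbols of $g$ formed with $\partial/\partial x^b$), drop the last two terms because the Cartan tensor is annihilated by $y$ (legitimate now that $g$ is homogeneous of degree zero), and contract with $y^c$: since $C_{dce}y^c=0$ and $\gamma^a{}_{bc}y^by^c=2G^a$, this forces $N^e_by^b=2G^e$ and then the closed form $N^a_c=\gamma^a{}_{bc}y^b-2G^e g^{ad}C_{dce}$, which a short computation identifies with $\partial G^a/\partial y^c$. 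Finally, for the invariance clause I would use that $\Psi$, $\bar\Psi$, $\tilde R$ and $\bar\omega$ are invariant under (\ref{syn}), and check directly that $y\cdot D g=0$ is preserved because the added $W_{abc}$ is symmetric in $a,b$ and annihilated by $y$ on the bundle index; as (\ref{syn}) shifts $\Lambda$ by an arbitrary such $W$ while the component analysis constrains $V$ only to be symmetric and $y$--annihilated, neither is further restricted, and the four notable connections---with $V\in\{0,C\}$ and $\Lambda\in\{0,L\}$, all totally symmetric and $y$--annihilated---are seen to satisfy ($\alpha$)--($\varepsilon$).
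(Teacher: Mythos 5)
Your proposal is correct, and it reaches every intermediate fact that the paper's proof reaches: the symmetry of the vertical coefficients from ($\beta$), the $y$-annihilation of $C$ from ($\gamma$), the verification that $\mathscr{L}=\tfrac12 g_{ab}y^ay^b$ has vertical Hessian $g$, strong regularity from ($\varepsilon$) via Proposition \ref{zzq}, the constraints on $\Lambda$ and $V$, and the spray computation identifying $N$ with Barthel's connection (this last part is the paper's argument essentially verbatim, Eqs.\ (\ref{raz})--(\ref{coa})). The genuine difference is where the Christoffel form of $H$ comes from. The paper works directly with the coefficients $(H,\tilde V)$ of $\nabla$, and extracts from ($\delta$) the relation $h_{abc}=h_{acb}+\tfrac{\delta}{\delta x^b}g_{ca}-\tfrac{\delta}{\delta x^c}g_{ba}$, which it resolves into Eq.\ (\ref{pfq}) by a six-step cyclic index shuffle, $\Lambda$ then being defined by Eq.\ (\ref{tva}). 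You instead push everything onto the canonical metric connection $\tilde\nabla$: since its horizontal torsion is exactly $\Psi$ and it satisfies $\tilde Dg=0$, condition ($\beta$) plus the vertical Koszul identity give $\widehat V_{abc}=C_{abc}$, and condition ($\delta$) plus the horizontal Koszul identity give $\widehat H_{abc}=\Gamma_{abc}$ by the textbook Levi-Civita uniqueness argument; translating back through $\tilde\omega^a{}_b=\omega^a{}_b+\tfrac12 g^{ar}Dg_{rb}$ yields $H_{[ab]c}=\Gamma_{[ab]c}$ and hence Eq.\ (\ref{dxd}). This buys two things: it replaces the index gymnastics by a standard argument, and it proves in passing that the canonical metric connection of the class is the Cartan connection (i.e.\ Corollary \ref{tty}), which the paper derives separately. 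Two loose ends, neither fatal: (i) your parenthetical claim that the symmetry of $\widehat V$ is ``equivalently'' $\tilde V_{[ab]c}=0$ deserves the one-line justification that $\widehat V_{abc}$ and $\tilde V_{abc}$ differ by half the vertical part of $Dg_{ab}$, which is symmetric in $ab$, so their antisymmetric parts coincide; (ii) in the closing clause, the symmetry (\ref{syn}) shifts \emph{both} $\Lambda$ and $V$ (the $1$-form $W^a{}_b$ has horizontal and vertical components), not only $\Lambda$ as your wording suggests --- this is precisely what places all connections satisfying ($\alpha$)--($\varepsilon$), in particular the four notable ones, in a single class; like the paper, you leave the converse verification (that connections of the stated form satisfy the axioms) as a routine check.
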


\begin{remark} \label{vus}
 By Eq.\ (\ref{vop}) condition ($\beta$) is really equivalent to $T \underset{\boldsymbol{\cdot}}{\wedge} (g \cdot \omega)=0$.
\end{remark}

\begin{remark}
We shall see later, cf.\ Corollary \ref{tty}, that the canonical metric connection of the class $[\nabla]_g$ selected by the theorem is the Cartan's connection of $\mathscr{L}$.
\end{remark}

\begin{remark}
For another study of condition ($\varepsilon$) see \cite[Prop.\ 3.2, Eq.\ (51)]{javaloyes20}.
\end{remark}

\begin{proof}
It is easy to check that the conditions ($\alpha$)-($\varepsilon$)
are invariant under (\ref{syn}). Thus
the conditions ($\alpha$)-($\varepsilon$) determine the connection at least up to a change (\ref{syn}). We shall see later that there is indeed no other freedom.
We shall have to prove that at least one notable connection satisfies the conditions, this way all notable connections will satisfy them as they are all related by a change of type (\ref{syn}).

By ($\alpha$) the connection is regular.
Since $\{\omega^a,\tilde \omega^a\}$ and $\{\omega^a,\bar \omega^a\}$  are bases for $T^*E$, $\omega^a{}_{b}$ is expressible as  linear combinations of the form
\[
\omega^a{}_{b}=H^a{}_{bc} \omega^c+ \tilde V^a{}_{bc} \tilde \omega^c=H^a{}_{bc} \omega^c+  V^a{}_{bc} \bar \omega^c,
\]
here $H^a{}_{bc},$ $V^a{}_{bc}$ and $\tilde V^a{}_{bc}$, are suitable coefficients. We denote $\tfrac{\delta}{\delta x^c}=\omega_c$, $C_{abc}=\frac{1}{2} \frac{\p}{\p y^c} g_{ab}$.
We have, using
the expansion of the connection in the basis $\{\omega^a,\tilde \omega^a\}$, (in the first equation we use $\tilde \omega^a(\bar \omega_b)=\delta^a_b$)
\begin{align*}
Dg_{ab}&=\left(\tfrac{\delta}{\delta x^c} g_{ab}-H_{bac}-H_{abc}\right) \omega^c +(2 C_{abc}-\tilde{V}_{bac}-\tilde{V}_{abc}) \tilde \omega^c \\
T^a&=H^a{}_{bc} \omega^c \wedge \omega^b+\tilde V^a{}_{bc} \tilde \omega^c \wedge \omega^b
\end{align*}
thus
\begin{align}
\Psi_a=&\tfrac{1}{2} D g_{ab} \wedge \omega^b+g_{ab} T^b= \tfrac{1}{2} \left(\tfrac{\delta}{\delta x^c} g_{ab}-H_{bac}-H_{abc}\right) \omega^c\wedge \omega^b \nonumber \\
&+\tfrac{1}{2}(2 C_{abc}-\tilde{V}_{bac}-\tilde{V}_{abc}) \tilde \omega^c \wedge \omega^b +H_{abc} \omega^c\wedge \omega^b+\tilde V_{abc} \tilde \omega^c\wedge \omega^b  \nonumber\\
&=\tfrac{1}{2} \left(\tfrac{\delta}{\delta x^c} g_{ab}+H_{abc}-H_{bac}\right) \omega^c\wedge \omega^b+\tfrac{1}{2}(2 C_{abc}+\tilde{V}_{abc}-\tilde{V}_{bac}) \tilde \omega^c \wedge \omega^b \label{ccf}
\end{align}
From ($\beta$) we get $\tilde{V}_{abc}=\tilde{V}_{bac}$, thus
\begin{equation} \label{tso}
\Psi_a=\tfrac{1}{2} \left(\tfrac{\delta}{\delta x^c} g_{ab}+H_{abc}-H_{bac}\right) \omega^c\wedge \omega^b+ C_{abc} \tilde \omega^c \wedge \omega^b
\end{equation}
 Imposing ($\gamma$), $\Psi_a y^a=0$,
we get that  $y^a C_{abc}=y^b C_{abc}=0$. Now the function $\mathscr{L}=\frac{1}{2} g_{ab} y^a y^b$, has first vertical derivative $\frac{\p \mathscr{L}}{\p y^p}=g_{p a} y^a+  C_{abp} y^a y^b=g_{p a} y^a$ and second derivative $\frac{\p^2 \mathscr{L}}{\p y^q\p y^p}=g_{pq}+2 C_{paq} y^a=g_{pq}$.
The metric is a vertical Hessian and furthermore, since
\[
y^a \frac{\p \mathscr{L}}{\p y^a}=2 \mathscr{L},
\]
by  Euler's homogeneous function theorem, $\mathscr{L}$ is positively homogeneous of degree two while $g$ is positively homogeneous of degree zero.
Since $C_{abc}=\frac{1}{2} \frac{\p}{\p y^c} g_{ab}=\frac{1}{2} \frac{\p^3}{\p y^a\p y^b\p y^c} \mathscr{L}$, we have that $C_{abc}$ is totally symmetric.

The condition $\Psi_a \wedge \bar \omega^a=0$ implies that the first term in the right-hand side of (\ref{tso}) vanishes, that is, setting $h_{abc}=H_{abc}-H_{bac}$
\[
h_{abc}=h_{acb}+\tfrac{\delta}{\delta x^b} g_{ca}-\tfrac{\delta}{\delta x^c} g_{ba}
\]
and hence $\Psi$ reads
\begin{equation} \label{cou}
\Psi_a= C_{abc} \tilde \omega^c \wedge \omega^b.
\end{equation}
Elaborating
\begin{align*}
h_{abc}&=h_{acb}+\left(\tfrac{\delta}{\delta x^b} g_{ca}-\tfrac{\delta}{\delta x^c} g_{ba}\right)=-h_{cab}+\left(\tfrac{\delta}{\delta x^b} g_{ca}-\tfrac{\delta}{\delta x^c} g_{ba}\right)\\
&=-h_{cba}+\left(\tfrac{\delta}{\delta x^b} g_{ca}-\tfrac{\delta}{\delta x^c} g_{ba}\right)+\left(\tfrac{\delta}{\delta x^b} g_{ac}-\tfrac{\delta}{\delta x^a} g_{bc}\right)\\
&=h_{bca}+(\tfrac{\delta}{\delta x^b} g_{ca}-\tfrac{\delta}{\delta x^c} g_{ba})+\left(\tfrac{\delta}{\delta x^b} g_{ac}-\tfrac{\delta}{\delta x^a} g_{bc}\right)\\
&=h_{bac}+\left(\tfrac{\delta}{\delta x^b} g_{ca}-\tfrac{\delta}{\delta x^c} g_{ba}\right)+\left(\tfrac{\delta}{\delta x^b} g_{ac}-\tfrac{\delta}{\delta x^a} g_{bc}\right)+\left(\tfrac{\delta}{\delta x^c} g_{ab}-\tfrac{\delta}{\delta x^a} g_{cb}\right)\\
&=-h_{abc}+2\tfrac{\delta}{\delta x^b} g_{ac}-2\tfrac{\delta}{\delta x^a} g_{cb}
\end{align*}
 thus
 \begin{equation} \label{pfq}
 H_{abc}-H_{bac}=\tfrac{\delta}{\delta x^b} g_{ac}-\tfrac{\delta}{\delta x^a} g_{cb}
 \end{equation}

Let us define $\Lambda_{abc}$ as follows
\begin{equation} \label{tva}
H_{abc}=\tfrac{1}{2} \left[\tfrac{\delta}{\delta x^b} \,g_{ac}+\tfrac{\delta}{\delta x^c} \, g_{ab}-\tfrac{\delta}{\delta x^a}\, g_{bc} \right] + \Lambda_{abc} .
\end{equation}
where the first term is symmetric in $bc$. From Eq.\ (\ref{pfq}) $\Lambda_{abc}=\Lambda_{bac}$.
Using $\tilde{V}_{abc}=\tilde{V}_{bac}$
\begin{align}
Dg_{ab}&=-2\Lambda_{abc} \omega^c +2( C_{abc}-\tilde{V}_{abc}) \tilde \omega^c , \label{scf}\\
T_a&=\Lambda_{abc} \omega^c \wedge \omega^b+\tilde V_{abc} \tilde \omega^c \wedge \omega^b . \label{mbi}
\end{align}
Thus imposing now $y^a Dg_{ab}=0$ hence (remember  $\Psi_a y^a=0$) $y_a T^a=0$ we get $\tilde V^a{}_{bc} y^b=0$, so strong regularity holds (hence $Q^a_b=\delta^a_b$, $\tilde V_{abc}=V_{abc}$)
and we also get $y^a \Lambda_{abc}=0$.

Equation (\ref{tva}) can be rewritten
\begin{equation} \label{cvx}
H^a{}_{bc}=\gamma^a_{bc}+ g^{an} [ -N^d_b  C_{dnc}-N^d_c C_{dnb}+N_n^d C_{dbc} ] + \Lambda^a{}_{bc}
\end{equation}
where
\begin{equation} \label{ffg}
\gamma^a_{bc}:=\tfrac{1}{2} g^{an} \left[\tfrac{\p}{\p x^b} \,g_{nc}+\tfrac{\p}{\p x^c} \, g_{nb}-\tfrac{\p}{\p x^n}\, g_{bc} \right],
\end{equation}
is clearly positive homogeneous of degree zero and $\gamma^a_{bc}=\gamma^a_{cb}$.

Let $2 G^a:=N^a_b y^b$. Contracting Eq.\ (\ref{cvx}) with $y^b$ we get
\begin{equation} \label{raz}
N^a_c=\gamma^a_{bc} y^b-g^{an} 2 G^d  C_{dnc}.
\end{equation}

Further contraction with $y^c$ gives
\begin{equation} \label{hfx}
2 G^a=\gamma^a_{pq} y^p y^q,
\end{equation}
so $G^a$ is positive homogeneous of degree two.
Now, observe that using (\ref{ffg})
\begin{align*}
 2\frac{\p G^a}{\p y^c}=  \gamma^a_{cq} y^q+ \gamma^a_{qc} y^q-2C^a{}_{c d} \gamma^d_{pq} y^p y^q= 2 \gamma^a_{qc} y^q-4C^a{}_{c d}  G^d ,
\end{align*}
thus comparing with (\ref{raz})
\begin{equation} \label{coa}
\frac{\p G^a }{\p y^c}=N^a_c ,
\end{equation}
namely the non-linear connection is Barthel
 (in particular $N$ is positive homogeneous of degree one).
 An easy manipulation of (\ref{hfx}) gives that $G^a$ is really the spray of the Lagrangian $\mathscr{L}$, cf.\ Eq.\ (\ref{vka}).
Notice that $\Lambda_{abc}$ is symmetric in the first two indices and annihilated by $y$ in the first two indices.  Similarly $V_{abc}$ is symmetric  in the first two indices  and annihilated by $y$ in the first two indices.

The tensors  $\Lambda_{abc}$  and $V_{abc}$ can be chosen at will provided these constraints are satisfied, so with suitable choices we can recover the Cartan, Chern-Rund, Berwald or Hashiguchi connections.
\end{proof}

The significance of Theorem \ref{xkj} is best understood asking ourselves the next questions: what is the compatibility condition between a non-linear connection $N$ and the metric $g$?

At first this question does not appear to be well posed. Since the non-linear connection is not a Finsler connection, it is not possible to take the covariant derivative of the metric and impose that it vanishes (see however \cite{bucataru07}).
Still, each non-linear connection $N$ determines in a canonical way a strongly regular Finsler connection $\nabla^N$ characterized by the next choice of coefficients
\[
 H^a{}_{bc}:=N^a{}_{bc}:=\p N^a_c/\p y^b \quad \textrm{and} \quad V^a{}_{bc}=0.
 \]
One could demand $\nabla^N g =0$ or a similar condition for horizontal vectors, but such a choice would not be adequate. Indeed, a condition of that kind would not be Finslerian. In order to stay in the realm of Finsler geometry we have rather to ask that $g$ be the vertical Hessian of some function $\mathscr{L}$ and that the non-linear connection $N$ be the Barthel's non-linear  connection for $\mathscr{L}$.

We recall that  a non-linear connection is torsionless if  $\tau^a{}_{bc}:=N^a{}_{cb}-N^a{}_{bc}=0$,  cf.\ \cite{modugno91} \cite[Sec.\ 3.3]{minguzzi14c}.

\begin{definition}
A pair $(g,N)$ is said to be compatible if any among the following equivalent conditions holds:
\begin{itemize}
\item[(a)] There is a function $\mathscr{L}:E\to \mathbb{R}$ positive homogeneous of degree two such that $g$ is the vertical Hessian of $\mathscr{L}$ and $N$ is the Barthel's non-linear connection of $\mathscr{L}$.
\item[(b)] The pair $(g,\nabla^N)$, and hence any element of $(g,[\nabla^N]_g)$, satisfies conditions ($\alpha$)-($\varepsilon$) of Thm.\ \ref{xkj}.
\item[(c)] The non-linear connection $N$ is positive homogeneous, torsionless and the following equation holds
\begin{equation} \label{ccp}
\nabla^N g_{ab}=X_{abc} \omega^c+ Y_{abc} \bar \omega^c
\end{equation}
where $X_{abc}$ and $Y_{abc}$  are totally symmetric and $y^aX_{abc}=y^a Y_{abc}=0$.
\end{itemize}
\end{definition}

Notice that in $(a)$ the function $\mathscr{L}$ is uniquely determined since by positive homogeneity it must be $\mathscr{L}=\tfrac{1}{2} g_y(y,y)$.

\begin{proof}[Proof of the equivalence.]
$(a) \Rightarrow (c)$. Under $(a)$ the Finsler connection $\nabla^N$ is Berwald's. As its torsion $T$ vanishes we have indeed, $N^a{}_{cb}-N^a{}_{bc}=0$ and by using (\ref{cgp}) it is easy to obtain the well known formula
\[
\nabla^N g_{ab}=-2 L_{abc} \omega^c+ 2 C_{abc} \bar \omega^c ,
\]
thus (\ref{ccp}) holds.

$(c) \Rightarrow (b)$. As $N$ is torsionless the torsion $T$ of $\nabla^N$ vanishes, so ($\alpha$) and ($\beta$)  are satisfied (Remark \ref{vus}). Condition ($\delta$) holds because $X_{abc}$ and $Y_{abc}$ are totally symmetric.
Conditions  ($\gamma$) and ($\varepsilon$) hold because they are annihilated by $y^a$.

$(b) \Rightarrow (a)$. If the regular connection $\nabla^N$ satisfies ($\alpha$)-($\varepsilon$) then by Thm.\ \ref{xkj} $N$  is the  Barthel's non-linear connection of $\mathscr{L}=\tfrac{1}{2} g_y(y,y)$ and $g$ is its vertical Hessian.
\end{proof}

Thus for a compatible pair $(g,N)$ it might be convenient to identify the non-linear connection with the class of Finsler connections $[\nabla]_g$ selected by   Thm.\ \ref{xkj}. Although, for a compatible pair we can recover $\mathscr{L}$ and hence any object of Finsler geometry, it is particularly interesting to look at those quantities that are easily constructed from Finsler connections in $[\nabla]_g$ , but that do not depend on the representative, in other words, that share the symmetry (\ref{syn}).

The following classical related result is worth mentioning  \cite[Sec.\ II.3]{laugwitz11} \cite{szabo08}  \cite[Thm.\ II.33]{grifone72} \cite{okada82} \cite[Cor.\ 14]{minguzzi14c}.

\begin{theorem}
Let $\mathscr{L}$ be a Finsler Lagrangian (hence positively homogeneous of degree two and with non-degenerate vertical Hessian).
There is one and only one torsionless non-linear connection positive
homogeneous of degree one for which the Finsler Lagrangian $\mathscr{L}$ vanishes on the
horizontal vectors, namely the Barthel non-linear connection.
\end{theorem}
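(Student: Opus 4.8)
The plan is to pass to the induced coordinates $\{x^a,y^b\}$ on $E$, translate the three hypotheses into algebraic/differential conditions on the non-linear connection coefficients $N^a_b$, and then solve for them, obtaining uniqueness and existence simultaneously. First I would make the phrase ``$\mathscr{L}$ vanishes on the horizontal vectors'' precise: since $\mathscr{L}$ is a function, the natural reading (the classical energy/horizontal-flatness condition of \cite{grifone72}) is that $\dd\mathscr{L}$ annihilates the horizontal distribution $HE$, i.e.\ the horizontal derivative vanishes,
\[
\frac{\delta \mathscr{L}}{\delta x^a}=\frac{\p \mathscr{L}}{\p x^a}-N^b_a\, \frac{\p \mathscr{L}}{\p y^b}=0 .
\]
Using $\p \mathscr{L}/\p y^b=g_{bc}y^c$ (the homogeneity identity already established in the proof of Theorem \ref{xkj}) this becomes
\[
\frac{\p \mathscr{L}}{\p x^a}=N^b_a\, g_{bc}\,y^c ;
\]
denote this relation by ($*$). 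Torsionlessness, $\p N^a_c/\p y^b=\p N^a_b/\p y^c$, says the vertical derivatives of $N^a_b$ are symmetric, so locally $N^a_b=\p G^a/\p y^b$; positive homogeneity of degree one then forces $2G^a=N^a_b y^b$ with $G^a$ homogeneous of degree two.

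For uniqueness I would differentiate ($*$) with respect to $y^e$, using $\p(g_{bc}y^c)/\p y^e=g_{be}$, to get
\[
\frac{\p^2 \mathscr{L}}{\p x^a\p y^e}=\frac{\p N^b_a}{\p y^e}\, g_{bc}y^c+N^b_a\, g_{be},
\]
and then contract with $y^a$ (the free index of ($*$)). Here torsionlessness is decisive: $(\p N^b_a/\p y^e)\,y^a=(\p N^b_e/\p y^a)\,y^a=N^b_e$ by Euler's theorem, while $N^b_a y^a=2G^b$. Applying ($*$) once more, everything collapses to
\[
2\,g_{be}\,G^b=\frac{\p^2 \mathscr{L}}{\p x^c\p y^e}\,y^c-\frac{\p \mathscr{L}}{\p x^e},
\]
which is exactly the spray formula (\ref{vka}). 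Thus $G^a$, and hence $N^a_b=\p G^a/\p y^b$, are completely determined: at most one such connection exists, the Barthel one.

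For existence I would run this computation backwards, starting from the Barthel connection $N^a_b=\p G^a/\p y^b$ with $2G^a$ defined by (\ref{vka}). Torsionlessness is automatic since $N^a_b$ comes from the potential $G^a$, and degree-one homogeneity follows from $G^a$ being homogeneous of degree two. It then remains to verify ($*$): setting $\Xi_e:=\frac{\p^2\mathscr{L}}{\p x^c\p y^e}y^c-\frac{\p\mathscr{L}}{\p x^e}=2g_{be}G^b$, Euler's theorem gives $\Xi_f y^f=\frac{\p\mathscr{L}}{\p x^c}y^c$, hence $G^b g_{bf}y^f=\tfrac12\frac{\p\mathscr{L}}{\p x^c}y^c$; writing $(\p_e G^b)g_{bf}y^f=\p_e(G^b g_{bf}y^f)-G^b g_{be}$ and substituting yields $N^b_e g_{bf}y^f=\p\mathscr{L}/\p x^e$, i.e.\ ($*$).

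I expect the main obstacle to be the homogeneity bookkeeping in the uniqueness step, and in particular recognising which contraction is productive. Contracting the differentiated ($*$) with $y^e$ produces only the tautology $\frac{\p^2\mathscr{L}}{\p x^a\p y^e}y^e=2\,\p\mathscr{L}/\p x^a$ and yields no information, whereas contracting with $y^a$ and invoking torsionlessness is what extracts the spray. This is precisely the step in which all three hypotheses — torsionlessness, degree-one homogeneity, and ($*$) — are simultaneously used; relaxing any one of them enlarges the solution set (for instance ($*$) alone constrains only the component of $N^b_a$ along $g_{bc}y^c$) and destroys uniqueness.
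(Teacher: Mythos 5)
Your proof is correct. A direct comparison is not possible, however, because the paper does not prove this theorem: it states it as a classical result and points to the literature (Grifone, Laugwitz, Okada, etc.), so there is no internal proof to measure your argument against. Your reading of ``$\mathscr{L}$ vanishes on the horizontal vectors'' as $\dd\mathscr{L}(HE)=0$, i.e.\ $\delta\mathscr{L}/\delta x^a=\p\mathscr{L}/\p x^a-N^b_a\,g_{bc}y^c=0$, is the intended one, and your uniqueness computation --- differentiate in $y^e$, contract with $y^a$, use torsionlessness plus Euler's theorem to collapse $(\p N^b_a/\p y^e)\,y^a$ into $N^b_e$ --- is essentially the same mechanism the paper employs inside the proof of Theorem \ref{xkj} (Eqs.\ (\ref{raz})--(\ref{coa})) to identify the induced non-linear connection with Barthel's, there with the extra scaffolding of a full Finsler connection. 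Your existence step, run backwards from the spray formula (\ref{vka}), is also complete, and your closing observation that the $y^e$-contraction is tautologous while the $y^a$-contraction is the productive one is accurate.

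One repair of phrasing rather than substance: the clause ``the vertical derivatives of $N^a_b$ are symmetric, so locally $N^a_b=\p G^a/\p y^b$'' appeals to the Poincar\'e lemma on the fibres, which is both unnecessary and slightly delicate, since $T_xM\setminus 0\simeq\mathbb{R}^n\setminus\{0\}$ is not simply connected when $n=2$. You never actually need it: define $2G^a:=N^a_b y^b$ outright; then torsionlessness and Euler's theorem give $\p G^a/\p y^b=\tfrac{1}{2}\bigl((\p N^a_c/\p y^b)\,y^c+N^a_b\bigr)=\tfrac{1}{2}\bigl((\p N^a_b/\p y^c)\,y^c+N^a_b\bigr)=N^a_b$, which is the only fact your subsequent manipulations use, and it furnishes a global, canonical potential. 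With that rewording the proof is airtight.
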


The proof of the following result is straightforward.
\begin{proposition}
Let $\nabla$ be regular Finsler connection. Let $\Lambda_{abc}$ and $\Pi_{abc}$ be defined by
\begin{equation} \label{cfp}
D g_{ab}=-2 \Lambda_{abc} \omega^c-2 \Pi_{abc} \bar \omega^c
\end{equation}
The condition
\begin{equation} \label{cck}
  \bar \omega \underset{\boldsymbol{\cdot}}{\wedge} Dg \underset{\boldsymbol{\cdot}}{\wedge} \omega=0
\end{equation}
is equivalent to the total symmetry of $\Lambda_{abc}$ and $\Pi_{abc}$.

In particular, if $\nabla$ is an element of the class $[\nabla]_g$ selected by Thm.\ \ref{xkj} then $-2 \Pi_{abc}:=2C_{abc}- 2V_{abc}$ thus $V_{abc}$ is totally symmetric.
\end{proposition}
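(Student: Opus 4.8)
The plan is to insert the defining expansion (\ref{cfp}) of $Dg_{ab}$ into the three--form (\ref{cck}) and to decode the result in the global coframe $\{\omega^a,\bar\omega^a\}$, which exists because $\nabla$ is regular. Before doing so I would record the one symmetry that is present from the outset: since $g$ is symmetric, $Dg_{ab}=Dg_{ba}$, so (\ref{cfp}) forces $\Lambda_{abc}=\Lambda_{bac}$ and $\Pi_{abc}=\Pi_{bac}$, i.e.\ symmetry in the first two indices. Total symmetry will therefore be reached as soon as (\ref{cck}) supplies, for each tensor, one additional independent transposition.

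Carrying out the substitution,
\[
\bar\omega\underset{\boldsymbol{\cdot}}{\wedge}Dg\underset{\boldsymbol{\cdot}}{\wedge}\omega=\bar\omega^a\wedge Dg_{ab}\wedge\omega^b=-2\Lambda_{abc}\,\bar\omega^a\wedge\omega^c\wedge\omega^b-2\Pi_{abc}\,\bar\omega^a\wedge\bar\omega^c\wedge\omega^b .
\]
The two summands live in the subspaces of $\bigwedge^3 T^*E$ spanned by the monomials with, respectively, exactly one and exactly two barred factors; these subspaces are linearly independent, so (\ref{cck}) is equivalent to the separate vanishing of each term. In the first summand the factor $\omega^c\wedge\omega^b$ extracts the part of $\Lambda_{abc}$ antisymmetric in $(b,c)$, so its vanishing is the symmetry $\Lambda_{abc}=\Lambda_{acb}$; together with $\Lambda_{abc}=\Lambda_{bac}$ the transpositions $(b\,c)$ and $(a\,b)$ generate $S_3$, giving total symmetry of $\Lambda$. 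In the second summand $\bar\omega^a\wedge\bar\omega^c$ extracts the part of $\Pi_{abc}$ antisymmetric in $(a,c)$, so its vanishing is $\Pi_{abc}=\Pi_{cba}$; together with $\Pi_{abc}=\Pi_{bac}$ the transpositions $(a\,c)$ and $(a\,b)$ again generate $S_3$ and yield total symmetry of $\Pi$. This is the asserted equivalence.

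For the final assertion I would specialize to $\nabla\in[\nabla]_g$ selected by Thm.\ \ref{xkj}. There strong regularity holds, so $\tilde\omega^a=\bar\omega^a$ and $\tilde V_{abc}=V_{abc}$, and the expansion (\ref{scf}) of the same covariant derivative becomes $Dg_{ab}=-2\Lambda_{abc}\omega^c+2(C_{abc}-V_{abc})\bar\omega^c$. Matching the $\bar\omega^c$--coefficient against (\ref{cfp}) gives the stated identification $-2\Pi_{abc}=2C_{abc}-2V_{abc}$, that is $\Pi_{abc}=V_{abc}-C_{abc}$. Once (\ref{cck}) is imposed, the equivalence just established makes $\Pi$ totally symmetric; since $C$ is totally symmetric (as shown inside Thm.\ \ref{xkj}), the combination $V_{abc}=\Pi_{abc}+C_{abc}$ is totally symmetric as well. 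In particular all the notable connections, for which $V$ equals $0$ or $C$, fall under this conclusion.

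The computations are routine, as the paper indicates; the only steps demanding attention are the clean decoupling of the single three--form (\ref{cck}) into its one--barred and two--barred components --- so that it really splits into two independent symmetry requirements --- and, in the specialization, the use of strong regularity to replace $\tilde\omega,\tilde V$ by $\bar\omega,V$ and thereby pin down $\Pi$ as $V-C$.
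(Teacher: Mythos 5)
Your proposal is correct: the paper itself omits the argument (declaring it ``straightforward''), and your computation --- expanding $Dg_{ab}$ in the coframe $\{\omega^a,\bar\omega^a\}$ guaranteed by regularity, splitting the $3$-form (\ref{cck}) into its one-barred and two-barred components, and combining the resulting transposition symmetries with the symmetry $\Lambda_{abc}=\Lambda_{bac}$, $\Pi_{abc}=\Pi_{bac}$ inherited from $g_{ab}=g_{ba}$ --- is precisely the routine verification the paper intends. Your handling of the final claim is also right, including the correct conditional reading (total symmetry of $V_{abc}$ follows once (\ref{cck}) is imposed) and the use of strong regularity from Theorem \ref{xkj} to identify $\tilde\omega^a=\bar\omega^a$, $\tilde V_{abc}=V_{abc}$ and hence $\Pi_{abc}=V_{abc}-C_{abc}$ via (\ref{scf}).
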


\begin{remark}
It is possible to define another equivalence relation with smaller equivalence classes by saying that two  connection 1-forms are equivalent if related by
\begin{equation} \label{sym}
\omega'{}^a{}_b = \omega^a{}_b+ A^a{}_{bc}\, \omega^c+ B^a{}_{bc}\, \bar \omega^c
\end{equation}
where $A_{abc}$ and $B_{abc}$, are totally symmetric and annihilated by $y^a$. The notable Finsler connections of a Lagrangian $\mathscr{L}$ would still belong to the same equivalence class.
\end{remark}

The next result and proof are a continuation of those of Theorem \ref{xkj}.

\begin{proposition}
The Finsler connections that satisfy  ($\alpha$)-($\varepsilon$) also satisfy the next identities
\begin{align}
g\cdot \Psi=\tfrac{1}{2} Dg  \underset{\boldsymbol{\cdot}}{\wedge} \omega + g \cdot T&= e^a \otimes C_{abc}   \bar \omega^c \wedge \omega^b, \label{car} \\
g \cdot \bar \Psi=\tfrac{1}{2} Dg  \underset{\boldsymbol{\cdot}}{\wedge} \bar \omega + g \cdot \bar T&= \! e^a \! \otimes\! \left(R_{abc} \tfrac{1}{2}\omega^b \!\wedge \omega^c\!\!-\! L_{abc} \omega^b \! \wedge \bar \omega^c \right)  \label{caz},\\
\bar \Psi \!\underset{\boldsymbol{\cdot}}{\wedge}\! (g \! \cdot \!  \omega)=\!- \tfrac{1}{2} \bar \omega \underset{\boldsymbol{\cdot}}{\wedge} \!  Dg  \underset{\boldsymbol{\cdot}}{\wedge}  \omega + \! \bar T \! \underset{\boldsymbol{\cdot}}{\wedge} \! (g \! \cdot \!  \omega) &=0, \label{riv}\\
\bar \Psi \underset{\boldsymbol{\cdot}}{\wedge} (g \cdot \bar \omega)=\bar T \underset{\boldsymbol{\cdot}}{\wedge} (g \cdot \bar \omega)&=R_{abc} \tfrac{1}{2} \bar \omega^a\wedge \omega^b \wedge \omega^c, \label{ppo} \\
y\cdot g\cdot T&=0,  \label{pri}\\
y \cdot g \cdot \bar \Psi=y \cdot g\cdot \bar T&=0, \label{sec}\\
 y \cdot D^2 g  \underset{\boldsymbol{\cdot}}{\wedge}  \bar \omega&=0, \label{ffl} \\
 y\cdot g \cdot D \bar T&=-R_{abc} \tfrac{1}{2} \bar \omega^a\wedge \omega^b \wedge \omega^c, \label{rok}
\end{align}
where the left-hand sides of (\ref{car})-(\ref{sec}) are   invariant under  (\ref{syn}).
Other quantities invariant under (\ref{syn}) are $\omega$, $\bar \omega$, $y \cdot g\cdot \omega$ and $y \cdot g\cdot \bar \omega$ (the latter is $\dd \mathscr{L}$).
Moreover, $\Omega:=\bar \omega \underset{\boldsymbol{\cdot}}{\wedge}  (g \cdot \omega) $ is also invariant and exact as $\Omega=\dd (y \cdot g\cdot \omega)$.

Finally,  we have the identity
\[
\bar \omega \underset{\boldsymbol{\cdot}}{\wedge} Dg \underset{\boldsymbol{\cdot}}{\wedge}  \omega = 2\bar T \underset{\boldsymbol{\cdot}}{\wedge}  (g \cdot \omega)=-y \cdot D^2 g  \underset{\boldsymbol{\cdot}}{\wedge} \omega=2y\cdot g \cdot DT .
\]
\end{proposition}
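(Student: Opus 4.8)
My plan is to continue within the notation and results of the proof of Theorem~\ref{xkj}: strong regularity, the total symmetry of $C_{abc}$, the Hessian identity $\mathscr L=\tfrac12 g_{ab}y^ay^b$ with $\partial_{y^a}\mathscr L=g_{ab}y^b$, the Barthel character of $N$, and the component formulas (\ref{cou}), (\ref{scf}), (\ref{mbi}) for $\Psi_a$, $Dg_{ab}$ and $T_a$. The first move is to settle the invariance assertions, because they organize everything else. Since $\Psi$ and $\bar\Psi$ are the horizontal and vertical torsions of the canonical metric connection, they are invariant under (\ref{syn}) by the theorem on canonical objects, while $\omega$, $\bar\omega=Dy$ and $g$ are invariant by construction; hence the left-hand sides of (\ref{car})--(\ref{sec}), being built from these, are invariant, as are $y\cdot g\cdot\omega$ and $y\cdot g\cdot\bar\omega$. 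The payoff is that to establish the explicit right-hand sides of (\ref{car})--(\ref{sec}) I may evaluate them on any representative of $[\nabla]_g$; I would take Cartan's connection $H_{abc}=\Gamma_{abc}$, $V_{abc}=C_{abc}$, for which $Dg=0$ and all auxiliary tensors are totally symmetric.

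Identity (\ref{car}) is then immediate: $g\cdot\Psi=\tfrac12 Dg\underset{\boldsymbol{\cdot}}{\wedge}\omega+g\cdot T$ is the lowered definition (\ref{vop}), and (\ref{cou}) gives $\Psi_a=C_{abc}\bar\omega^c\wedge\omega^b$. The heart of the computation is (\ref{caz}), for which I would compute the vertical torsion $\bar T=D\bar\omega$ in coordinates on the Cartan representative. Using $\bar\omega^a=\dd y^a+N^a_c\omega^c$ and $\dd\omega^c=0$ one gets $\dd\bar\omega^a=\tfrac12 R^a{}_{bc}\,\omega^b\wedge\omega^c+G^a{}_{bc}\,\bar\omega^b\wedge\omega^c$, with $R^a{}_{bc}$ the non-linear curvature and $G^a{}_{bc}=\partial N^a_c/\partial y^b$; adding $\omega^a{}_b\wedge\bar\omega^b=\Gamma^a{}_{bc}\,\omega^c\wedge\bar\omega^b$ (the $V$-part drops by antisymmetry) and invoking $G^a{}_{bc}=\Gamma^a{}_{bc}+L^a{}_{bc}$ from (\ref{cgp}) yields $\bar T_a=\tfrac12 R_{abc}\,\omega^b\wedge\omega^c-L_{abc}\,\omega^b\wedge\bar\omega^c$, which, since $Dg=0$ here, is exactly $g\cdot\bar\Psi$. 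Wedging this with $g\cdot\bar\omega$ and using total symmetry of $L$ gives (\ref{ppo}); contracting with $y$ and using $y^aL_{abc}=0$ reduces (\ref{sec}) to $y_aR^a{}_{bc}=0$.

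That last identity is the one genuine extra input. I would obtain it from the Barthel property: $N$ being Barthel is equivalent to the horizontal constancy $\delta_a\mathscr L=0$ of the Lagrangian (the classical theorem quoted above; concretely $N^c_a y_c=\partial_{x^a}\mathscr L$ follows from the homogeneity of $\mathscr L$). Then $0=[\delta_b,\delta_c]\mathscr L=-R^a{}_{bc}\,\partial_{y^a}\mathscr L=-R^a{}_{bc}y_a$, so $y_aR^a{}_{bc}=0$ and (\ref{sec}) follows. The remaining identities are then obtained by global manipulations valid for every representative. Identity (\ref{pri}) is $y^aT_a=0$, read off from (\ref{mbi}) and the $y$-annihilation of $\Lambda$ and $V$; with (\ref{pri}) and $(\varepsilon)$, $\dd(y\cdot g\cdot\omega)=\Omega$, so $\Omega$ is exact and $\dd\Omega=0$, whence (\ref{riv}) follows from (\ref{vvo}) together with $(\delta)$. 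For (\ref{ffl}) I would apply $D$ to $(\varepsilon)$, giving $y^aD^2g_{ab}=-\bar\omega^a\wedge Dg_{ab}$, and then note $\bar\omega^a\wedge Dg_{ab}\wedge\bar\omega^b=0$ because $Dg_{ab}$ is symmetric while $\bar\omega^a\wedge(\cdot)\wedge\bar\omega^b$ is antisymmetric in $a,b$. The same symmetry trick applied to $\omega$ gives the second equality of the final display; its first equality is (\ref{riv}); and for the third I use $\dd(y\cdot g\cdot T)=0$ with $Dy_a=\bar\omega_a$ (from $(\varepsilon)$) to get $y\cdot g\cdot DT=-\bar\omega^a\wedge T_a$, then split $T_a=\Psi_a-\tfrac12 Dg_{ab}\wedge\omega^b$ and kill the $\Psi$-term via (\ref{car}). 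Identity (\ref{rok}) is analogous, using $y\cdot g\cdot\bar T=0$ from (\ref{sec}) in place of (\ref{pri}).

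I expect the two genuinely laborious points to be the coordinate evaluation of $\bar T$ underlying (\ref{caz}), where the bookkeeping between $\Gamma$, $G$ and the Landsberg tensor via (\ref{cgp}) must be done carefully, and the verification that $y_aR^a{}_{bc}=0$, i.e.\ that horizontal constancy of $\mathscr L$ is really forced by the Barthel property. Everything else is a short antisymmetry or Leibniz argument once the invariance reduction has been set up.
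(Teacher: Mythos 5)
Your proposal is correct, and for most of the identities it follows the paper's own proof: (\ref{car}) is read off Eq.\ (\ref{cou}); (\ref{caz}) is obtained exactly as in the paper, namely by invariance of $\bar\Psi$ under (\ref{syn}), passage to the Cartan representative, and the coordinate computation of $\bar T=D\bar\omega$ using Eq.\ (\ref{cgp}); (\ref{ppo}) then follows by the symmetry of $L$; (\ref{riv}) follows from the exactness of $\Omega$ together with (\ref{vvo}) and ($\delta$); and (\ref{pri}), (\ref{ffl}), (\ref{rok}) and the final display are the same Leibniz/antisymmetry manipulations the paper performs (your derivation of $y\cdot g\cdot DT=\tfrac{1}{2}\bar\omega^a\wedge Dg_{ab}\wedge\omega^b$ via the splitting $T_a=\Psi_a-\tfrac{1}{2}Dg_{ab}\wedge\omega^b$ is the paper's computation verbatim). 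One small looseness: the invariance of the left-hand side of (\ref{pri}) is not literally a matter of being ``built from canonical objects'', since $T$ itself is not invariant; one should either check directly that $y_aW^a{}_{bc}=0$ under the constraints on $W$, or note that under ($\varepsilon$) one has $y\cdot g\cdot T=y\cdot g\cdot\Psi$.

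The one genuine divergence is (\ref{sec}). The paper proves $y\cdot g\cdot\bar T=0$ directly and coordinate-free: by ($\varepsilon$), $y\cdot g\cdot\bar\omega=\dd\mathscr{L}$, so $y^ag_{ab}\bar T^b=\dd(y^ag_{ab}\bar\omega^b)-\bar\omega^a\wedge g_{ab}\bar\omega^b-y^aDg_{ab}\wedge\bar\omega^b=\dd^2\mathscr{L}=0$; only in the remark following the proposition does it observe that, combined with (\ref{caz}), this yields $y^aR_{abc}=0$. You run the implication backwards: you first establish $y_aR^a{}_{bc}=0$ from the Barthel property (horizontal constancy $\delta_a\mathscr{L}=0$ plus the commutator formula $[\delta_b,\delta_c]=-R^a{}_{bc}\p/\p y^a$) and then read (\ref{sec}) off (\ref{caz}). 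This route is valid — your identity $N^b_ay_b=\p\mathscr{L}/\p x^a$ does hold for the Barthel connection — but it imports two ingredients the paper never needs: the classical theorem characterizing Barthel, and the commutator formula, which is unproved in the paper and is precisely the kind of horizontal-derivative computation the paper's stated philosophy avoids. Moreover, the horizontal constancy you flag as a laborious verification is actually immediate in this setting: by ($\varepsilon$), $\dd\mathscr{L}=y\cdot g\cdot\bar\omega$, and horizontal vectors lie in $\ker\bar\omega$, so $\delta_a\mathscr{L}=0$ comes for free. In short, your treatment of (\ref{sec}) buys a transparent geometric statement (constancy of $\mathscr{L}$ along $HE$ forces $y\cdot R=0$) at the cost of extra machinery, whereas the paper's two-line $\dd^2\mathscr{L}=0$ argument is shorter and stays entirely within its exterior-calculus framework.
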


\begin{remark}
Due to Eq.\ (\ref{caz}), the equations (\ref{riv}) and (\ref{sec}) provide results for the induced non-linear curvature, namely $R_{[abc]}=0$ and $y^a R_{abc}=0$.
\end{remark}

\begin{proof}
From ($\varepsilon$) we have
\[
\dd \mathscr{L}=\dd \left(\tfrac{1}{2} y \cdot g \cdot y\right)=y \cdot g \cdot \bar \omega+\tfrac{1}{2} y \cdot D g \cdot y=y \cdot g \cdot \bar \omega
\]
Equation (\ref{pri}) is immediate from ($\gamma$) and ($\varepsilon$).
Equation (\ref{sec}) follows from ($\varepsilon$), indeed
\[
y^a g_{ab} \bar T^b=\dd (y^ag_{ab} \bar \omega^b)-\bar \omega^a\wedge g_{ab} \bar \omega^b-y^a D g_{ab} \wedge \bar \omega^b= \dd^2 \mathscr{L}-0-0=0
\]

From ($\varepsilon$) and $y\cdot g\cdot T =0$  we get
\[
\dd \left(y^a g_{ab} \omega^b\right)=\bar \omega^a \wedge g_{ab} \omega^b+y^a D g_{ab} \wedge \omega^b+ y^a g_{ab}  T^b=\bar \omega^a \wedge g_{ab} \omega^b=\Omega.
\]
Thus $\dd \Omega=0$, so  from Eq.\ (\ref{vvo}) and ($\delta$) we get (\ref{riv}).
Using ($\varepsilon$)
\begin{align*}
0=\dd \left(y^a Dg_{ab} \wedge \omega^b\right)&= \bar \omega^a \wedge  Dg_{ab} \wedge  \omega^b+ y^a D^2g_{ab} \wedge \omega^b-y^a Dg_{ab}
\wedge T^b\\
&= \bar \omega^a \wedge  Dg_{ab} \wedge  \omega^b+ y^a D^2g_{ab} \wedge \omega^b .
\end{align*}
Similarly, using   ($\varepsilon$)
\begin{align*}
0=\dd \left(y^a Dg_{ab} \wedge \bar \omega^b\right)&= \bar \omega^a \wedge  Dg_{ab} \wedge  \bar \omega^b+ y^a D^2g_{ab} \wedge \bar \omega^b-y^a Dg_{ab}
\wedge \bar T^b\\
&= - y^a D^2g_{ab} \wedge \bar \omega^b,
\end{align*}
which proves  (\ref{ffl}).
Using (\ref{pri})  we get
\begin{align*}
0&=\dd \left(y^a  g_{ab} T^b\right)=\bar \omega^a \wedge g_{ab} T^b+y^a D g_{ab} \wedge T^b+ y^a  g_{ab}  D T^b\\
&= g_{ab} \Psi^b \wedge \bar \omega^a -\tfrac{1}{2} \bar \omega^a \wedge  Dg_{ab} \wedge   \omega^b + y^a  g_{ab}  D T^b \\
&=-\tfrac{1}{2} \bar \omega^a \wedge  Dg_{ab} \wedge   \omega^b + y^a  g_{ab}  D T^b .
\end{align*}

By the strong regularity of the connection, Equation (\ref{car}) was obtained in Eq.\ (\ref{cou}). Let us denote $G^a{}_{bc}=\frac{\p}{\p y^b}N^a_c$.
Since $\bar \Psi^a$ does not depend on the representative of the class $[\nabla]_g$ we can calculate it using the Cartan connection for which $V_{abc}=C_{abc}$ is totally symmetric. Using Eq. (\ref{vdp}) and ($\varepsilon$)
\begin{align*}
\bar \Psi^a=\bar T^a&=D(Dy^a)=\dd (Dy^a)+\omega^a{}_b \wedge Dy^b=\dd (\dd y^a +N^a_c \omega^c)+\Gamma^a{}_{bc} \omega^c \wedge \bar \omega^b\\
&= \left(\frac{\delta N^a_c}{\delta x^b} \omega^b+G^a{}_{bc} \bar \omega^b\right) \wedge \omega^c+\Gamma^a{}_{bc} \omega^c \wedge \bar \omega^b\\
&= \frac{\delta N^a_c}{\delta x^b} \omega^b \wedge \omega^c+ ( G^a{}_{bc}-\Gamma^a{}_{bc})  \bar \omega^b \wedge \omega^c=R^a{}_{bc} \tfrac{1}{2} \omega^b \wedge \omega^c+ L^a{}_{bc} \bar \omega^b \wedge \omega^c
\end{align*}
so we get Eq.\ (\ref{caz}).
Equation (\ref{ppo}) is immediate from (\ref{caz}).
Equation (\ref{rok}) follows from (\ref{ppo}) and (\ref{sec})
\[
0=\dd \left(y^a g_{ab} \bar T^b\right)=\bar \omega^a \wedge  g_{ab} \bar T^b+y^a D g_{ab} \wedge T^b+y^a g_{ab}  D \bar T^b= \bar \omega^a \wedge  g_{ab} \bar T^b+y^a g_{ab} D \bar T^b .
\]
\end{proof}

It is quite interesting that expressions (\ref{car}) and (\ref{caz}) do not depend on the representative and that they involve some of the most interesting quantities in Finsler geometry, namely the Cartan torsion, the non-linear curvature and the Landsberg tensor. This fact proves that these quantities live, so to say,  in the geometry of compatible pairs $(g,N)$ rather than in $(g,\nabla)$. Another invariant is the canonical curvature. An invariant related to it appeared in a joint work with A.\ Garc\'{\i}a-Parrado \cite{minguzzi20b} where we introduced the amplified symmetry.  In that work we presented an application in the context of Einstein-Cartan gravity theory  but a further work with application to Finsler gravity will soon appear \cite{minguzzi20c}.
The importance of this curvature invariant stands in the fact that   a  Finsler gravity theory can be obtained by implementing it in a Finslerian action that  suitably generalizes Einstein-Hilbert's \cite{minguzzi20c}. Our previous discussion can then be used to argue that the field variables can be identified with $(g,N)$ instead of $(g,\nabla)$.

In the next sections we characterize the Chern and Cartan connections. It is worth pointing out that these Finsler connections admit nice characterizations in terms of foliation theory \cite{feng13}.

\subsection{The Chern connection}

The next result removes the Hessian metric assumption from Chern's theorem. Furthermore, the only type of derivation that appears in the statement is the covariant exterior differential. There is no need to preliminarily define the Cartan torsion by means of a vertical derivative. This version seems  to be the best suited for computations.
\begin{theorem} \label{kkj}
Let $g$ be a metric on the pullback tangent bundle.
There is only one  Finsler connection  with the following properties
\begin{itemize}
\item[($\tilde \alpha$)] Regularity,
\item[($\tilde \beta$)] $T=0$, \qquad \qquad  \quad (no horizontal torsion)
\item[($\tilde \gamma$)]  $y \cdot Dg=0$, \qquad  \quad (positive homogeneity)
    \item[($\tilde \delta$)] $Dg \underset{\boldsymbol{\cdot}{}}{\wedge} Dy=0$. \quad  \ \ (almost metric compatibility)
\end{itemize}
Moreover, under these conditions $g$ is the vertical Hessian of a function $\mathscr{L}\colon E\to \mathbb{R}$ positive homogeneous of degree 2, and the  connection is Chern's. Finally, these results do not change if ($\tilde \delta$) is replaced by
\begin{itemize}
\item[($\tilde \delta'$)]   $y\cdot D^2 g=0$.
\end{itemize}
\end{theorem}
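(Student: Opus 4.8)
The plan is to reduce everything to Theorem~\ref{xkj} and then extract the extra rigidity carried by the stronger hypotheses $(\tilde\beta)$ and $(\tilde\delta)$. First I would show that $(\tilde\alpha)$--$(\tilde\delta)$ imply the conditions $(\alpha)$--$(\varepsilon)$ of that theorem. Here $(\tilde\alpha)$ is $(\alpha)$ and $(\tilde\gamma)$ is $(\varepsilon)$. Setting $T=0$ in Eq.~(\ref{vop}) gives $\Psi_c=\tfrac12 Dg_{cb}\wedge\omega^b$ after lowering, and since $g$, hence $Dg_{ab}$, is symmetric in $a,b$, the contraction $\Psi\underset{\boldsymbol{\cdot}}{\wedge}(g\cdot\omega)=\tfrac12 Dg_{ab}\wedge\omega^b\wedge\omega^a$ vanishes identically (symmetric coefficients against the antisymmetric $\omega^b\wedge\omega^a$), so $(\beta)$ holds. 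Contracting $\Psi_c$ with $y^c$ and using $(\tilde\gamma)$ in the form $y^aDg_{ab}=0$ yields $(\gamma)$; and $\Psi_c\wedge\bar\omega^c=\tfrac12 Dg_{cb}\wedge\omega^b\wedge\bar\omega^c=-\tfrac12(Dg_{cb}\wedge\bar\omega^c)\wedge\omega^b$ vanishes by $(\tilde\delta)$, using once more the symmetry of $Dg$ to identify the contracted index, giving $(\delta)$. Hence Theorem~\ref{xkj} applies: $g$ is the vertical Hessian of $\mathscr{L}=\tfrac12 g_y(y,y)$, positively homogeneous of degree two, strong regularity holds, the induced non-linear connection is Barthel's, $C_{abc}$ is totally symmetric, and $H_{abc}=\Gamma_{abc}+\Lambda_{abc}$ as in Eq.~(\ref{tva}) with $V_{abc}$ and $\Lambda_{abc}$ symmetric in the first two indices and annihilated by $y$ there.

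Next I would exploit $T=0$ in full. Under strong regularity $\tilde\omega^a=\bar\omega^a$ and $\tilde V_{abc}=V_{abc}$, so Eq.~(\ref{mbi}) reads $T_a=\Lambda_{abc}\,\omega^c\wedge\omega^b+V_{abc}\,\bar\omega^c\wedge\omega^b$. As the 2-forms $\bar\omega^c\wedge\omega^b$ are linearly independent of the $\omega^c\wedge\omega^b$ and among themselves, the vanishing of $T$ forces $V_{abc}=0$ and, from the $\omega^c\wedge\omega^b$ part, $\Lambda_{a[bc]}=0$, i.e.\ $\Lambda_{abc}$ totally symmetric. With $V=0$ the derivative (\ref{scf}) becomes $Dg_{ab}=-2\Lambda_{abc}\,\omega^c+2C_{abc}\,\bar\omega^c$. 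Feeding this into $(\tilde\delta)$ and using the total symmetry of $C$, the term $2C_{abc}\,\bar\omega^c\wedge\bar\omega^b$ drops out, leaving $-2\Lambda_{abc}\,\omega^c\wedge\bar\omega^b=0$; the independence of the mixed 2-forms $\omega^c\wedge\bar\omega^b$ then gives $\Lambda_{abc}=0$. Therefore $H_{abc}=\Gamma_{abc}$ and $V_{abc}=0$, precisely the coefficients of the Chern connection, which proves uniqueness.

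For existence I would run the computation backwards: for the Chern connection of the Hessian $\mathscr{L}$ one has $\Lambda=V=0$, so (\ref{scf}) gives $Dg_{ab}=2C_{abc}\bar\omega^c$; then $(\tilde\gamma)$ follows from $y^aC_{abc}=0$, $(\tilde\delta)$ from $Dg_{ab}\wedge\bar\omega^b=2C_{abc}\bar\omega^c\wedge\bar\omega^b=0$ by symmetry of $C$, $(\tilde\beta)$ from (\ref{mbi}), and $(\tilde\alpha)$ from strong regularity. For the equivalence of $(\tilde\delta)$ and $(\tilde\delta')$, I would differentiate $(\tilde\gamma)$: applying $D$ to $y^aDg_{ab}=0$ and using $Dy^a=\bar\omega^a$ gives the identity $y^aD^2g_{ab}=-\bar\omega^a\wedge Dg_{ab}=Dg_{ab}\wedge\bar\omega^a$. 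By the symmetry of $Dg$ the right-hand side is exactly the component of $Dg\underset{\boldsymbol{\cdot}}{\wedge}Dy$, so under $(\tilde\gamma)$ the conditions $y\cdot D^2g=0$ and $Dg\underset{\boldsymbol{\cdot}}{\wedge}Dy=0$ coincide and the whole analysis is unchanged.

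The only delicate points are bookkeeping ones: correctly tracking which $T^*M$-index of the symmetric tensor $Dg$ is contracted in $(\tilde\delta)$ and $(\tilde\delta')$, and justifying the linear independence of the mixed 2-forms $\omega^c\wedge\bar\omega^b$ used to annihilate $V$ and $\Lambda$. The conceptual heart of the argument, and where I expect the real work to lie, is the observation that $T=0$ already kills the vertical coefficients $V_{abc}$ (because $\bar\omega^c\wedge\omega^b$ cannot cancel against $\omega^c\wedge\omega^b$), while the full strength of $(\tilde\delta)$ beyond the weaker $(\delta)$ of Theorem~\ref{xkj} is precisely what annihilates the horizontal deformation $\Lambda_{abc}$, thereby singling out Chern's connection among the class $[\nabla]_g$.
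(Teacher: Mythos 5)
Your proposal is correct and takes essentially the same route as the paper: it reduces to Theorem~\ref{xkj} via the implications $(\tilde\beta)\Rightarrow(\beta)$, $(\tilde\beta),(\tilde\gamma)\Rightarrow(\gamma)$, $(\tilde\beta),(\tilde\delta)\Rightarrow(\delta)$, then kills $V_{abc}$ using Eq.~(\ref{mbi}) with $T=0$ and kills $\Lambda_{abc}$ using Eq.~(\ref{scf}) together with $(\tilde\delta)$, and derives $(\tilde\delta)\Leftrightarrow(\tilde\delta')$ by applying $D$ to $(\tilde\gamma)$, exactly as in the paper. Your explicit backward verification of existence for the Chern connection is a small, welcome addition that the paper leaves implicit, but it does not alter the structure of the argument.
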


The almost metric compatibility condition appearing in this statement does not coincide with the condition   $(iii)$ appearing in Chern's original theorem, or with the condition $(iii')$ in Abate's version commented previously.
They are related as follows $(iii)\Rightarrow (\tilde\delta) \Rightarrow (iii')$. The first implication is immediate using the symmetry of the Cartan torsion (if one writes $(iii)$ one should assume that the metric is of Hessian type),   for the second implication it is sufficient to take the interior product with an arbitrary horizontal vector. Still, as explained below, as a whole the conditions of this theorem are no stronger than Abate's.

\begin{proof}
The equivalence of ($\tilde \delta$) and ($\tilde\delta'$) is simple, in fact taking the covariant exterior differential of ($\tilde\gamma$)
\[
y\cdot D^2 g+Dg \underset{\boldsymbol{\cdot}}{\wedge} Dy=0.
\]
With reference to Thm.\ \ref{xkj}, it is clear that $(\tilde \alpha) = (\alpha)$, $(\tilde \gamma)=(\varepsilon)$, $(\tilde \beta)$ and  $(\tilde \gamma) \Rightarrow (\gamma)$,  $(\tilde \beta) \Rightarrow (\beta)$, $(\tilde \beta)$ and  $(\tilde \delta) \Rightarrow (\delta)$.
The assumptions of Thm.\ \ref{xkj} are satisfied, thus $g$ is the vertical Hessian of a function $\mathscr{L}\colon E\to \mathbb{R}$ positive homogeneous of degree 2 and the non-linear connection induced by $\nabla$ is Barthel's. From Eq.\ (\ref{cfp}) and ($\tilde \delta$) we get $\Lambda_{abc}=0$ and from Eq.\ (\ref{dxd}) we obtain $H^a{}_{bc}=\Gamma^a{}_{bc}$. From Eq.\ (\ref{mbi}) and $(\tilde \beta)$ we get $V^a{}_{bc}=0$. Hence the connection is Chern's.
\end{proof}

\begin{proof}[Proof of Abate's version via Theorem \ref{kkj}]
Regularity and the condition $T=0$ imply that $H^a{}_{bc}=H^a{}_{cb}$ and $V^a{}_{bc}=0$, thus the connection is strongly regular.  We notice that
\[
D g_{ab}=\left(\tfrac{\delta}{\delta x^c} g_{ab} -H^d{}_{ac} g_{db}-H^d{}_{bc} g_{ad}\right) \omega^c+2C_{abc} \bar \omega^c
\]
thus if one assumes that $g$ is a vertical Hessian of a function positive homogeneous of degree two, $C_{abc}$ is totally symmetric and annihilated by $y$ and hence $(\tilde \gamma)$ and $(\tilde \delta)$ follow from the assumption that $D_X g=0$ for $X$ horizontal. Thus the assumptions of Theorem \ref{kkj} hold.
\end{proof}

\subsection{The Cartan connection}
We are ready to present our characterization of the Cartan connection
\begin{theorem} \label{kwj}
There is only one  Finsler connection  with the following properties
\begin{itemize}
\item[($\hat \alpha$)] `regularity and $y \cdot g \cdot T=0$' or strong regularity,
\item[($\hat \beta$)] $T \underset{\boldsymbol{\cdot}{}}{\wedge} (g\cdot \omega)=0$, \qquad \qquad   (almost no horizontal torsion)
\item[($\hat \gamma$)]  $T \underset{\boldsymbol{\cdot}{}}{\wedge} (g \cdot  D y)=0$, \qquad  \quad (horizontal torsion symmetry)
\item[($\hat \delta$)]   $Dg=0$. \qquad \qquad \qquad \ \  (metricity)
\end{itemize}
Moreover, under these conditions $g$ is the vertical Hessian of a function $\mathscr{L}\colon E\to \mathbb{R}$ positive homogeneous of degree 2, and the  connection is Cartan's. Finally, under the option  `regularity and $y \cdot g \cdot T=0$' in $(\hat \alpha)$ we can replace ($\hat \gamma$) with
\begin{itemize}
\item[($\hat \gamma'$)] $y \cdot g \cdot  R \underset{\boldsymbol{\cdot}{}}{\wedge} \omega$=0.
\end{itemize}
\end{theorem}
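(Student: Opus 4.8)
The plan is to reduce Theorem~\ref{kwj} to Theorem~\ref{xkj}, exactly as the proof of Theorem~\ref{kkj} does for Chern's connection, the new feature being that the full metricity axiom $(\hat\delta)$ is strictly stronger than $(\varepsilon)$ and collapses the canonical horizontal torsion onto the ordinary one. Indeed, by Eq.~(\ref{vop}) we have $\Psi^a=T^a+\tfrac12 g^{ar}Dg_{rb}\wedge\omega^b$, so $(\hat\delta)$ gives both $\Psi=T$ and $y\cdot Dg=0$, i.e.\ $(\varepsilon)$. Under the identification $\Psi=T$ the remaining axioms translate verbatim into the hypotheses of Theorem~\ref{xkj}: $(\hat\beta)$ is $(\beta)$ by the Remark following that theorem; the condition $y\cdot g\cdot T=0$ contained in $(\hat\alpha)$ is $\Psi\cdot g\cdot y=0$, i.e.\ $(\gamma)$; and $(\hat\gamma)=T\underset{\boldsymbol{\cdot}}{\wedge}(g\cdot Dy)$ becomes $\Psi\underset{\boldsymbol{\cdot}}{\wedge}(g\cdot\bar\omega)$, i.e.\ $(\delta)$, since $Dy=\bar\omega$. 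Regularity is common. Hence $(\alpha)$--$(\varepsilon)$ hold, Theorem~\ref{xkj} applies, $g$ is the vertical Hessian of $\mathscr{L}=\tfrac12 g_y(y,y)$, the non-linear connection is Barthel's, strong regularity holds, and $H_{abc}=\Gamma_{abc}+\Lambda_{abc}$ with $V_{abc}$ symmetric in its first two indices and annihilated by $y$.

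To single out Cartan's connection I would then feed $(\hat\delta)$ back into Eq.~(\ref{scf}), which under the strong regularity just obtained reads $Dg_{ab}=-2\Lambda_{abc}\,\omega^c+2(C_{abc}-V_{abc})\,\tilde\omega^c$. As $\{\omega^c,\tilde\omega^c\}$ is a basis of $T^*E$, the equation $Dg=0$ forces $\Lambda_{abc}=0$ and $V_{abc}=C_{abc}$. By Eq.~(\ref{dxd}) this gives $H^a{}_{bc}=\Gamma^a{}_{bc}$ and $V^a{}_{bc}=C^a{}_{bc}$, which is precisely Cartan's connection; since no freedom in $\Lambda$ or $V$ survives, the connection is unique.

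For the strong-regularity alternative in $(\hat\alpha)$ one cannot invoke $y\cdot g\cdot T=0$ directly, so I would produce $(\gamma)$ from the other axioms. Expanding $Dg_{ab}$ in the basis $\{\omega^c,\tilde\omega^c\}$ and imposing $(\hat\delta)$ together with $(\hat\beta)$ (which yields $\tilde V_{abc}=\tilde V_{bac}$) gives $\tilde V_{abc}=C_{abc}$; strong regularity, in the form $\tilde V_{abc}y^b=0$ (Proposition~\ref{zzq}), then forces $y^a C_{abc}=0$, whence $\mathscr{L}=\tfrac12 g_y(y,y)$ already has $g$ as vertical Hessian and $C$ is totally symmetric. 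Writing $(\hat\gamma)$ in this basis, where $Dy=\bar\omega=\tilde\omega$, the vanishing of the $\omega^c\wedge\omega^d\wedge\tilde\omega^a$ part of $T_a\wedge\tilde\omega^a$ forces $H_{abc}=H_{acb}$; this symmetry kills the horizontal part of $T$ and, with $y^a C_{abc}=0$, delivers $y\cdot g\cdot T=0$, i.e.\ $(\gamma)$. (Symmetry of $H$ in its last two indices, combined with the metricity identity $\tfrac{\delta}{\delta x^c}g_{ab}=H_{abc}+H_{bac}$, even re-derives $H=\Gamma$ by the usual Koszul manipulation, giving a self-contained route for this case.)

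The genuinely non-routine step is the replacement of $(\hat\gamma)$ by $(\hat\gamma')$, available under Option A where $y\cdot g\cdot T=0$ is at hand. Differentiating this scalar identity and using $Dg=0$ gives
\[
0=D(y^a g_{ab} T^b)=\bar\omega^a g_{ab}\wedge T^b+y^a g_{ab}\,DT^b ,
\]
hence $y\cdot g\cdot DT=-\,\bar\omega\underset{\boldsymbol{\cdot}}{\wedge}(g\cdot T)$. The first Bianchi identity $DT=R\underset{\boldsymbol{\cdot}}{\wedge}\omega$ turns the left member into $y\cdot g\cdot R\underset{\boldsymbol{\cdot}}{\wedge}\omega$, which is $(\hat\gamma')$, while a parity count gives $\bar\omega\underset{\boldsymbol{\cdot}}{\wedge}(g\cdot T)=T\underset{\boldsymbol{\cdot}}{\wedge}(g\cdot\bar\omega)$, which is $(\hat\gamma)$. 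Thus $(\hat\gamma')\Leftrightarrow(\hat\gamma)$ and the earlier argument carries over unchanged. I expect the main obstacles to be precisely this Bianchi-plus-differentiation reformulation and the independent derivation of $(\gamma)$ under strong regularity; the rest is careful bookkeeping of $\Psi$ versus $T$ and of the two cobases $\{\omega,\tilde\omega\}$ and $\{\omega,\bar\omega\}$.
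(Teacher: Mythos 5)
Your proposal is correct and follows essentially the same route as the paper's proof: both reduce the axioms to conditions ($\alpha$)--($\varepsilon$) of Theorem~\ref{xkj} by noting that metricity $(\hat\delta)$ collapses the canonical torsion $\Psi$ onto $T$ and yields $(\varepsilon)$, both then pin down $\Lambda_{abc}=0$ and $V_{abc}=C_{abc}$ from $Dg=0$ via Eqs.~(\ref{dxd}) and (\ref{scf}) to identify Cartan's connection, and both obtain $(\hat\gamma)\Leftrightarrow(\hat\gamma')$ by differentiating $y\cdot g\cdot T=0$, using $Dg=0$ and the first Bianchi identity $DT=R\underset{\boldsymbol{\cdot}}{\wedge}\omega$. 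Your separate, more explicit derivation of $(\gamma)$ under the strong-regularity branch of $(\hat\alpha)$ (via $\tilde V_{abc}=C_{abc}$, Proposition~\ref{zzq}, and the symmetry $H_{abc}=H_{acb}$ forced by $(\hat\gamma)$) is finer bookkeeping than the paper's one-line reconciliation of the two options, but it is the same underlying argument.
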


\begin{proof}
By ($\hat\alpha$) the connection is regular. The last statement follows from
\[
\dd \left(y \cdot g \cdot  T\right)=\dd \left(y^a g_{ab} T^b\right)=(D y^a) \wedge g_{ab} T^b+ y^a g_{ab} D T^b=  T  \underset{\boldsymbol{\cdot}{}}{\wedge} (g \cdot \bar \omega)+ y \cdot g \cdot R \underset{\boldsymbol{\cdot}{}}{\wedge}  \omega,
\]
where we used the first Bianchi identity. Since $T^a=H^a{}_{bc} \omega^c \wedge \omega^b+V^a{}_{bc} \bar \omega^c \wedge \omega^b$, condition ($\hat \beta$) implies $V_{abc}=V_{bac}$ thus $y \cdot g \cdot T=0$ implies strong regularity. Conversely, by  ($\hat \gamma$) $T^a=V^a{}_{bc} \bar \omega^c \wedge \omega^b$ thus strong regularity implies $y \cdot g \cdot T=0$.
With reference to Thm.\ \ref{xkj} we have $(\hat \alpha) \Rightarrow (\alpha)$, $(\hat \beta)$ and $(\hat \delta)$ $\Rightarrow (\beta)$, $(\hat \gamma)$  and $(\hat \delta) \Rightarrow (\delta)$, $(\hat \delta) \Rightarrow (\varepsilon)$, $(\hat \alpha)$ and $(\hat \delta) \Rightarrow (\gamma)$.

The assumptions of Thm.\ \ref{xkj} are satisfied thus $g$ is the vertical Hessian of a function $\mathscr{L}\colon E\to \mathbb{R}$ positive homogeneous of degree 2, and the  non-linear connection induced by $\nabla$ is Barthel's.  From Eq.\ (\ref{cfp}) and $(\hat \delta)$ we get $\Lambda_{abc}=  \Pi_{abc} =0$, thus from Eq.\ (\ref{dxd}) we get $H^a{}_{bc}=\Gamma^a{}_{bc}$  and from Eq.\ (\ref{scf}) we get $V^a{}_{bc}=C^a{}_{bc}$, thus the connection is Cartan's.
\end{proof}

\begin{corollary} \label{tty}
The canonical metric connection of the class $[\nabla]_g$ selected by Theorem  \ref{xkj} is the Cartan's connection of $\mathscr{L}=\tfrac{1}{2} g_y(y,y)$.
 \end{corollary}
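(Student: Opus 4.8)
The plan is to avoid any fresh computation by exploiting two facts already at our disposal: that the canonical metric connection is the same for every representative of a class $[\nabla]_g$ (so that ``the canonical metric connection of $[\nabla]_g$'' is unambiguous, and I may compute it from whichever representative is most convenient), and that, by Theorem \ref{xkj}, the class selected there contains \emph{all} the notable connections, in particular Cartan's. The idea is then to observe that Cartan's connection is metric, hence equal to its own canonical metric connection, and to invoke representative-independence to conclude.

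Concretely, I would first pick the Cartan connection as the representative, which is legitimate because it lies in the class. For it the Cartan data are $\Lambda_{abc}=0$ and $V_{abc}=C_{abc}$; substituting these into the expansion (\ref{scf}) of $Dg_{ab}$ (where strong regularity, guaranteed by Theorem \ref{xkj}, has already been used to write $\tilde\omega^a=\bar\omega^a$ and $\tilde V_{abc}=V_{abc}$) gives $Dg_{ab}=-2\cdot 0\cdot\omega^c+2(C_{abc}-C_{abc})\bar\omega^c=0$, i.e.\ metricity. The defining formula $\tilde\omega^a{}_b=\omega^a{}_b+\tfrac12 g^{ar}Dg_{rb}$ then collapses to $\tilde\omega^a{}_b=\omega^a{}_b$, so the canonical metric connection computed from the Cartan representative \emph{is} Cartan's connection; by representative-independence this is the canonical metric connection of the whole class, which is the assertion.

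For a self-contained check that does not appeal to the prior identification of Cartan's connection, I would instead start from an arbitrary representative and feed (\ref{dxd}), namely $H_{abc}=\Gamma_{abc}+\Lambda_{abc}$, together with (\ref{scf}) into the definition of $\tilde\nabla$. Raising an index gives $\tfrac12 g^{ar}Dg_{rb}=-\Lambda^a{}_{bc}\,\omega^c+(C^a{}_{bc}-V^a{}_{bc})\,\bar\omega^c$, so that adding $\omega^a{}_b=H^a{}_{bc}\,\omega^c+V^a{}_{bc}\,\bar\omega^c$ makes the $\Lambda$-terms and the $V$-terms cancel, and one reads off $\tilde H^a{}_{bc}=\Gamma^a{}_{bc}$ and $\tilde V^a{}_{bc}=C^a{}_{bc}$, the Cartan coefficients.

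There is no real obstacle here, only a bookkeeping point one must not skip: the cancellation of the vertical parts is exact precisely because strong regularity yields $\tilde\omega^a=\bar\omega^a$ and $\tilde V_{abc}=V_{abc}$, so that the $V^a{}_{bc}\bar\omega^c$ term of $\omega^a{}_b$ is killed by the $-V^a{}_{bc}\bar\omega^c$ piece produced by $\tfrac12 g^{ar}Dg_{rb}$, leaving exactly $C^a{}_{bc}\bar\omega^c$. Once this is kept straight, the identification with Cartan's connection is immediate.
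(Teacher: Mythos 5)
Your argument is correct, and it follows a genuinely different route from the paper's. The paper's proof never leaves its axiomatic framework: it notes that, thanks to ($\varepsilon$), the canonical metric connection $\tilde\nabla$ differs from $\nabla$ by a symmetry (\ref{syn}) and hence satisfies ($\alpha$)--($\varepsilon$) as well; since moreover $\tilde D g=0$, the conditions ($\hat\alpha$)--($\hat\delta$) of Theorem \ref{kwj} hold for $\tilde\nabla$, and the uniqueness part of that theorem then identifies $\tilde\nabla$ with Cartan's connection. You instead bypass Theorem \ref{kwj} altogether: you take Cartan's connection as the representative (legitimate, since Theorem \ref{xkj} asserts the class contains all the notable connections), observe via (\ref{scf}) with $\Lambda_{abc}=0$, $V_{abc}=C_{abc}$ that it is metric, so it is a fixed point of the map $\nabla\mapsto\tilde\nabla$, and then invoke the proposition that all members of a class share the same canonical metric connection. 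Your coefficient-level variant, in which $\tfrac{1}{2}g^{ar}Dg_{rb}$ read off from (\ref{scf}) cancels the $\Lambda$- and $V$-parts of $\omega^a{}_b$ leaving $\Gamma^a{}_{bc}\,\omega^c+C^a{}_{bc}\,\bar\omega^c$, is equally sound, and your bookkeeping remark about strong regularity ($\tilde\omega^a=\bar\omega^a$, $\tilde V_{abc}=V_{abc}$) is indeed the one point that must not be skipped. As for what each approach buys: the paper's proof doubles as a consistency check between its two characterizations (Theorems \ref{xkj} and \ref{kwj}) and needs no explicit Cartan coefficients at the level of the corollary; yours is more economical and self-contained, requiring only Theorem \ref{xkj} and the elementary invariance properties of the canonical metric connection, but it leans on the prior identification of Cartan's connection inside the class, i.e., on the coefficient choices made at the end of the proof of Theorem \ref{xkj}.
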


\begin{proof}
The connection $\nabla$ satisfies the conditions $(\alpha)$-$(\varepsilon)$ and the canonical metric connection $\tilde \nabla$ satisfies these same conditions because it is related to $\nabla$ by a symmetry (\ref{syn}). The condition $(\alpha)$-$(\varepsilon)$ for $Dg=0$ imply those of Thm.\ \ref{kwj}, thus $\tilde \nabla$ is the Cartan connection of $\mathscr{L}$.
\end{proof}

It does not seem possible to obtain similar theorems for the Berwald or Hashiguchi connection, the best one can do  is expressed precisely by  Thm.\ \ref{xkj}. Existing characterizations do not respect the general philosophy of this work previously outlined \cite{abate96,okada82,minguzzi14c}.

\section{Conclusions}

In this work we recovered Finsler geometry via conditions on the pair $(g,\nabla)$ expressed by means of the covariant exterior differential and without (i) ever resorting to the splitting of tensor-valued 2-forms (or any $k$-form for the matter) on $E$ in $hh$, $vh$ or $vv$ components, and  (ii) ever using other derivatives but the covariant exterior differential (so never mentioning nor introducing horizontal or vertical covariant derivatives).

This approach appears to be  more economic than previous approaches and better suited for computations.

As a result we gave new characterizations of the Chern and Cartan connections. In particular, we clarified that the Chern connection is not the only Finsler connection that can be characterized by means of its connection forms.

We also found convenient to introduce and explore a notion of compatibility condition between the non-linear connection $N$ and the metric $g$. Our approach has emphasized the fact that the non-linear connection can be regarded as an equivalence class of Finsler connections, and that the most relevant properties of the pair $(g,N)$ are those which are independent of the representative $\nabla$, namely those invariant under the symmetry (\ref{syn}). As already mentioned, the application  to a Finsler gravity theory are presented in a different work \cite{minguzzi20c}.

\section*{Acknowledgments}   This work has been
partially supported by GNFM of INDAM.


\begin{thebibliography}{10}

\bibitem{warner65}
F.~W. Warner,
\newblock``The conjugate locus of a {R}iemannian manifold,"
\newblock {\it Amer. J. Math.}
\newblock {\bf  87}, 575 (1965).

\bibitem{lovas10}
R.~L. Lovas,
\newblock ``A note on {F}insler-{M}inkowski norms,"
\newblock {\it Houston J. Math.}
\newblock {\bf  33}, 701 (2007).

\bibitem{szilasi89}
J.~Szilasi,
\newblock ``On {F}insler connections,"
\newblock {\it Colloquia Mathematica Societatis J\'anos Bolyai}
\newblock {\bf  56}, 649 (1989).

\bibitem{anastasiei96}
M.~Anastasiei,
\newblock ``Finsler connections in generalized {L}agrange spaces,"
\newblock {\it Balkan J. Geom. Appl.}
\newblock {\bf  1}, 1 (1996).

\bibitem{abate96}
M.~Abate,
\newblock ``A characterization of the {C}hern and {B}ernwald connections,"
\newblock {\it Houston J. Math.}
\newblock {\bf  22}, 701 (1996).

\bibitem{chern48}
S.-S. Chern,
\newblock ``Local equivalence and {E}uclidean connection in {F}insler spaces,"
\newblock {\it Sci. Rep. Nat. Tsing Hua Univ. Ser. A}
\newblock {\bf  5}, 95 (1948), {\it (or Selected papers}, vol. II, 194--212,
  Springer 1989).

\bibitem{chern92b}
S.-S. Chern,
\newblock ``On {F}insler geometry,"
\newblock {\it C. R. Acad. Sci. Paris, S\'erie I}
\newblock {\bf  314}, 757 (1992).

\bibitem{bao93}
D.~Bao and S.-S. Chern,
\newblock ``On a notable connection in {F}insler geometry,"
\newblock {\it Houston J. Math.}
\newblock {\bf  19}, 137 (1993).

\bibitem{spiro99}
A.~Spiro,
\newblock ``Chern's orthonormal frame bundle of a {F}insler space,"
\newblock {\it Houston Journal of Mathematics}
\newblock {\bf  25}, 641 (1999).

\bibitem{chern00}
S.~S. Chern, W.~H. Chen, and K.~S. Lam,
\newblock {\em Lectures on differential geometry},
\newblock World {S}cientific, Singapore 2000.

\bibitem{bao00}
D.~Bao, S.-S. Chern, and Z.~Shen,
\newblock {\em An Introduction to {R}iemann-{F}insler Geometry},
\newblock {Springer-Verlag}, New York 2000.

\bibitem{shen01}
Z.~Shen,
\newblock {\em Lectures on {F}insler geometry},
\newblock World {S}cientific, Singapore 2001.

\bibitem{matsumoto86}
M.~Matsumoto,
\newblock {\em Foundations of Finsler Geometry and special Finsler Spaces},
\newblock Kaseisha Press, Tokio 1986.

\bibitem{bejancu90}
A.~Bejancu,
\newblock {\em Finsler Geometry and Applications}
\newblock Ellis Horwood, New York 1990.

\bibitem{anastasiei96b}
M.~Anastasiei,
\newblock ``A historical remark on the connections of {C}hern and {R}und,"
\newblock in {\em Finsler Geometry},  edited by D.~Bao an~S.-S.~Chern and
  Z.~Shen, Vol. Contemporary Mathematics
\newblock Amer. Math. Soc., Providence, Rhode Island 1996,
\newblock p. 171.

\bibitem{szilasi14}
J.~Szilasi, R.~L. Lovas, and D.~Cs. Kertesz,
\newblock {\em Connections, sprays and Finsler structures},
\newblock World {S}cientific, London 2014.

\bibitem{minguzzi14c}
E.~Minguzzi,
\newblock ``The connections of pseudo-{F}insler spaces,"
\newblock {\it Int. J. Geom. Meth. Mod. Phys.}
\newblock {\bf  11}, 1460025 (2014), { Erratum ibid 12 (2015) 1592001.
  {arXiv}:1405.0645}.

\bibitem{vincze00}
CS. Vincze,
\newblock ``On {W}agner connections and {W}agner manifolds,"
\newblock {\it Acta Math. Hungar.}
\newblock {\bf  89 (1-2)}, 111 (2000).

\bibitem{laugwitz11}
D.~Laugwitz,
\newblock ``Geometrical methods in the differential geometry of {F}insler
  spaces,"
\newblock in {\em Geometria del calcolo delle variazioni},
\newblock Vol.~23 of {\em C.I.M.E. Summer Sch.}
\newblock Springer, Heidelberg, Fondazione C.I.M.E., Florence, 2011,
\newblock p. 173,
\newblock {\em Reprint of the 1961 original}.

\bibitem{mo10}
Libing Mo, Xiaohuan;~Huang,
\newblock ``On characterizations of {R}anders norms in {M}inkowski space,"
\newblock {\it Int. J. Math.}
\newblock {\bf  21} (2010).

\bibitem{minguzzi15e}
E.~Minguzzi,
\newblock ``Affine sphere relativity,"
\newblock {\it {C}ommun. {M}ath. {P}hys.}
\newblock {\bf  350}, 749 (2017), {\em {arXiv}:1702.06739}.

\bibitem{bryant02}
R.~L. Bryant,
\newblock ``Some remarks on {F}insler manifolds with constant flag curvature,"
\newblock {\it Houston J. Math.}
\newblock {\bf  28}, 221 (2002), {\em Special issue for S. S. Chern}.

\bibitem{sabau10}
S.~V. Sabau, K.~Shibuya, and H.~Shimada,
\newblock ``On the existence of generalized unicorns on surfaces,"
\newblock {\it Differential Geom. Appl.}
\newblock {\bf  28}, 406 (2010).

\bibitem{klein68}
J.~Klein and A.~Voutier,
\newblock ``Formes ext{\'e}rieures g{\'e}n{\'e}ratrices de sprays,"
\newblock {\it Annales de l'institut Fourier}
\newblock {\bf  18}, 241 (1968).

\bibitem{grifone72}
J.~Grifone,
\newblock ``Structure presque tangente et connections {I},"
\newblock {\it Ann. Inst. {F}ourier, {G}renoble}
\newblock {\bf  22}, 287 (1972).

\bibitem{godbillon69}
C.~Godbillon,
\newblock {\em G\'eom\'etrie diff\'erentielle et m\'ecanique analytique},
\newblock Hermann, Paris 1969.

\bibitem{szilasi98}
J.~Szilasi,
\newblock ``Notable {F}insler connections on a {F}insler manifold,"
\newblock {\it Lect. Mat.}
\newblock {\bf  19}, 7 (1998).

\bibitem{szilasi00}
J.~Szilasi and CS. Vincze,
\newblock ``A new look at Finsler connections and special Finsler manifolds,"
\newblock {\it Acta Mathematica Academiae Paedagogicae Ny{\'\i}regyh\'aziensis}
\newblock {\bf  16}, 33 (2000).

\bibitem{youssef14}
N.~L. Youssef and S.~G. Elgendi,
\newblock ``Existence and uniqueness of {C}hern connection in the
  {Klein-Grifone} approach,"
\newblock {\it Journal of Dynamical Systems and Geometric Theories}
\newblock {\bf 18}  193--209 (2020)

\bibitem{chern96}
S.-S. Chern,
\newblock ``Finsler geometry is just {R}iemannian geometry without the quadratic
  restriction,"
\newblock {\it Notices of the AMS}
\newblock {\bf  43}, 959 (1996).

\bibitem{minguzzi20c}
A.\ Garc\'{\i}a-Parrado and E.~Minguzzi,
\newblock ``An anisotropic gravity theory,"
\newblock {\it Gen. Relativ. Gravit.}
\newblock {\bf  54}, 150 (2022), {\em {arXiv:}2206.09653}.

\bibitem{ingarden93}
R.~S. Ingarden and M.~Matsumoto,
\newblock ``On the 1953 {B}arthel connection of a {F}insler space and its
  mathematical and physical interpretation,"
\newblock {\it Rep. Math. Phys.}
\newblock {\bf  32}, 35 (1993).

\bibitem{javaloyes20}
M.~A. Javaloyes,
\newblock ``Curvature computations in {F}insler geometry using a distinguished
  class of anisotropic connections,"
\newblock {\it Mediterr. J. Math.},
\newblock {\bf  17}, 123 (2020).

\bibitem{minguzzi20b}
A.\ Garc\'{\i}a-Parrado and E.~Minguzzi,
\newblock ``Projective and amplified symmetries in metric-affine theories,"
\newblock {\it Class. Quantum Grav.}
\newblock {\bf  38}, 135001 (2021), {\em {arXiv}:2006.04040}.

\bibitem{bucataru07}
I.~Bucataru,
\newblock ``Metric nonlinear connections,"
\newblock {\it Diff. Geom. Appl.}
\newblock {\bf  25}, 335 (2007).

\bibitem{modugno91}
M.~Modugno,
\newblock ``Torsion and {R}icci tensor for non-linear connections,"
\newblock {\it Diff. Geom. Appl.}
\newblock {\bf  1}, 177 (1991).

\bibitem{szabo08}
Z.~I. Szab{\'o},
\newblock ``All regular {L}andsberg metrics are {B}erwald,"
\newblock {\it Ann. Global Anal. Geom.}
\newblock {\bf  34}, 381 (2008).

\bibitem{okada82}
T.~Okada,
\newblock ``Minkowskian product of {F}insler spaces and {B}erwald connection,"
\newblock {\it J. Math. Kyoto Univ. (JMKYAZ)}
\newblock {\bf  22}, 323 (1982).

\bibitem{feng13}
Huitao Feng and Ming Li,
\newblock ``Adiabatic limit and connections in {F}insler geometry,"
\newblock {\it Commun. Anal. Geom.}
\newblock {\bf  21}, 607 (2013).

\end{thebibliography}

\end{document}